\documentclass[a4paper]{article}
\usepackage{amsthm,amssymb,amsmath,enumerate}

\usepackage{booktabs}

\usepackage{hyperref}
\hypersetup{
	colorlinks=true,  
	linkcolor=black,    
	citecolor=black,    
	urlcolor=black      
}

\usepackage{tikz}
\usetikzlibrary{calc}
\usetikzlibrary{backgrounds}

\tikzstyle{hvertex}=[thick,circle,inner sep=0.cm, minimum size=2mm, fill=white, draw=black]
\tikzstyle{hedge}=[very thick]
\tikzstyle{harrow}=[thick,arrows=->]
\tikzstyle{darrow}=[thick,arrows=<-]
\tikzstyle{rededge}=[very thick,red]
\tikzstyle{point}=[draw,circle,inner sep=0.cm, minimum size=1mm, fill=black]
\tikzstyle{pointer}=[thick,->,shorten >=2pt,color=grau]
\tikzstyle{facebdry}=[color=auchblau, very thick] 
\tikzstyle{face}=[facebdry,fill=hellblau]
\tikzstyle{nface}=[color=hellblau,fill=hellblau,thick] 
\tikzset{>={latex}}
\tikzstyle{tinyvx}=[thick,circle,inner sep=0.cm, minimum size=1.3mm, fill=white, draw=black]

\colorlet{auchblau}{blue!60!white}
\colorlet{hellblau}{blue!20!white}
\colorlet{hellrot}{red!40!white}
\colorlet{grau}{black!50!white}

\newtheorem{definition}{Definition}
\newtheorem{proposition}[definition]{Proposition}
\newtheorem{theorem}[definition]{Theorem}

\newtheorem{lemma}[definition]{Lemma}

\newtheorem{problem}[definition]{Problem}

\newcommand{\comment}[1]{}
\newcommand{\N}{\mathbb N}

\newcommand{\polylog}{{\rm polylog}}

%


\newcommand{\sm}{\setminus}

\newcommand{\new}[1]{#1}
\newenvironment{newe}{}{}
\newcommand{\bn}{\begin{newe}}
\newcommand{\en}{\end{newe}}

\title{Frames, $A$-paths and the Erd\H os-P\'osa property}
\author{Henning Bruhn\thanks{Partially supported by DFG, grant no.\  BR 5449/1-1} \and Matthias Heinlein \and Felix Joos\thanks{The research was also supported by the EPSRC, grant no.\ EP/M009408/1.}}
\date{}

\begin{document}

\maketitle

\begin{abstract}
A key feature of
Simonovits' proof of the classic Erd\H os-P\'osa theorem is a simple subgraph of the host graph, a \emph{frame},
that determines the outcome of the theorem. We transfer this frame technique to $A$-paths. With it 
we deduce a simple proof of Gallai's theorem, although with a worse bound, and we verify the Erd\H os-P\'osa 
property for long and for even $A$-paths. 
We also show that even $A$-paths do not have the edge-Erd\H os-P\'osa property. 
\end{abstract}

\section{Introduction}

If a graph is far away from being a tree, is there a simple certificate for that --- for instance
a large number of \new{vertex-}disjoint cycles? The classic theorem of Erd\H os and P\'osa asserts that, 
yes, there is such a certificate:

\begin{theorem}[Erd\H os and P\'osa~\cite{EP65}]\label{epthm}
For every positive integer $k$, 
every graph either contains $k$ \new{vertex-}disjoint cycles
or a set of $O(k\log k)$ vertices that meets every cycle.
\end{theorem}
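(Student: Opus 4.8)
The plan is to prove Theorem~\ref{epthm} by induction on $k$, in the slightly more quantitative form that the required function can be taken to satisfy $f(k)\le f(k-1)+c\log k$ for a fixed constant $c$ (so that $f(k)=O(k\log k)$), after first normalising the host graph.

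\textbf{Reduction to minimum degree at least three.} I would first repeatedly delete vertices of degree at most $1$ and suppress vertices of degree $2$ (replacing the two incident edges by a single one, allowing the resulting multigraph to acquire loops, counted as cycles of length one, and parallel edges, counted as cycles of length two). None of these operations affects whether $k$ disjoint cycles are present, and a vertex hitting set $X$ of the reduced multigraph lifts to a hitting set of $G$ of no larger size: the branch vertices in $X$ already lie in $G$, and every cycle of $G$ projects to a cycle of the reduced graph, which meets $X$ at a branch vertex and hence also meets $X$ in $G$. So from now on I may assume that $G$ has minimum degree at least $3$; if the normalisation emptied the graph, then $G$ was a forest and $\emptyset$ does the job.

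\textbf{Short cycles and an easy $O(k\log n)$ bound.} The workhorse is the elementary fact that a multigraph with minimum degree at least $3$ on $n$ vertices contains a cycle of length at most $2\lceil\log_2 n\rceil+1$: in a breadth-first search from any vertex, each new level is at least twice the size of the previous one until two search branches meet, which must happen within $\lceil\log_2 n\rceil$ levels and closes a short cycle. Iterating this --- pull out a shortest cycle $C_1$, delete its vertices, re-normalise to minimum degree $\ge 3$, pull out a shortest cycle $C_2$, and so on until nothing remains --- produces disjoint cycles $C_1,\dots,C_t$ whose vertex sets together meet every cycle of $G$ (whatever remains after deleting all of them is a forest, and undoing the re-normalisations and deletions shows every cycle of $G$ was met). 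If $t\ge k$ we are in the first case of the theorem; otherwise $t\le k-1$, and since each $C_i$ has only $O(\log n)$ vertices we already obtain a hitting set of size $O(k\log n)$.

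\textbf{From $\log n$ to $\log k$ --- the main obstacle.} The actual content is upgrading this to $O(k\log k)$. For the recursion it suffices to find in the normalised graph $G$ \emph{either} a cycle $C$ on at most $c\log k$ vertices --- then apply the induction hypothesis to $G-V(C)$, adjoining $V(C)$ to the resulting hitting set of size $f(k-1)$, or adjoining $C$ to the resulting $k-1$ disjoint cycles --- \emph{or} $k$ disjoint cycles outright. If $n\le k^{O(1)}$ the short-cycle fact delivers a cycle of length $O(\log n)=O(\log k)$, so the first alternative holds. The difficulty is the regime where $n$ exceeds every polynomial in $k$: there one would like to conclude that $G$ has $k$ disjoint cycles, but minimum degree $\ge 3$ alone does not force this --- $K_{3,n}$ has minimum degree $3$ and arbitrarily many vertices yet only one disjoint cycle (while admitting a hitting set of size $3$). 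So one must neutralise this ``high-degree obstruction'' first. I would pass to an inclusion-minimal subgraph $F$ of minimum degree $\ge 3$: there the vertices of degree $\ge 4$ form an independent set (deleting an edge between two of them would contradict minimality), and an edge count gives $|E(F)|\le 3|V(F)|$, so all but a bounded fraction of $V(F)$ has degree exactly $3$. One then argues that either these high-degree vertices can be charged to the hitting set (as in $K_{3,n}$) or used to assemble many short disjoint cycles, while on the bounded-degree part a shortest cycle together with its now bounded-size neighbourhood can be deleted and the graph re-normalised while keeping it large, so the extraction can be repeated $k$ times. Making this case analysis precise --- fixing the polynomial threshold, controlling the cascade of re-normalisations in the bounded-degree part, and balancing everything against the $O(k\log k)$ budget --- is the step I expect to absorb most of the work.
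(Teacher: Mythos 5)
Your proposal stops exactly where the real difficulty begins, and you say so yourself in the last sentence. The normalisation to minimum degree at least $3$, the breadth-first-search bound giving a cycle on $O(\log n)$ vertices, and the resulting $O(k\log n)$ hitting set are all fine; but the upgrade from $\log n$ to $\log k$ is the entire content of the theorem, and it is left as ``the step I expect to absorb most of the work''. The sketch you offer for that step also points in the wrong direction: an inclusion-minimal subgraph $F$ of minimum degree $\ge 3$ need not meet every cycle of the host (it might, for instance, be a single short cycle in a graph with many more), so vertices drawn from $F$ are not a priori a hitting set, and what ``charging the high-degree vertices to the hitting set'' would mean is never specified. You have spotted the obstruction ($K_{3,n}$) but not removed it.

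Simonovits' proof, which the paper follows, removes it by a different normalisation: rather than asking for minimum degree $\ge 3$ (which puts no cap on degrees and so allows $K_{3,n}$ to survive), one takes a maximal \emph{subcubic} subgraph $F$ of $G$, every degree $2$ or $3$. Maximality of $F$ ensures that essentially every cycle of $G$ attaches to $F$ at a degree-$3$ vertex (modulo the technicalities the paper flags in its footnote), so the degree-$3$ vertices of $F$ are the candidate hitting set, and suppressing degree-$2$ vertices turns $F$ into a cubic multigraph. The missing ingredient in your outline is precisely Lemma~\ref{framelem}: a cubic multigraph on $(4+o(1))k\log k$ vertices already contains $k$ disjoint cycles. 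Cubicity is essential there --- deleting a short cycle of length $O(\log n)$ from a cubic graph and re-normalising removes only $O(\log n)$ further vertices, so the extraction can be iterated $\Omega(n/\log n)$ times --- whereas in a min-degree-$3$ graph with unbounded maximum degree, deleting a single short cycle can wipe out the whole graph, as $K_{3,n}$ shows. To close your induction you would in effect need to prove Lemma~\ref{framelem} or an equivalent, and your minimal-subgraph detour does not get you there.
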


The proof of Erd\H os and P\'osa, while not long, is somewhat indirect and in particular
rests on a result about $A$-paths of Gallai. (We will come back to that.) 
Simonovits~\cite{Sim67} gave a different and cleaner proof that works in two steps. 
First Simonovits showed that a large  cubic multigraph always contains many disjoint cycles.
\new{(Whenever we write ``disjoint'' we mean ``vertex-disjoint''.)}
\begin{lemma}[Simonovits~\cite{Sim67}]\label{framelem}
Every cubic multigraph with at least $(4+o(1))k\log k$ vertices contains $k$ disjoint cycles.\sloppy 
\end{lemma}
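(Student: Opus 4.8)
The plan is to build the $k$ cycles greedily: repeatedly extract a shortest cycle $C$, delete $V(C)$, and clean up the remainder so that it is cubic again, all the while bounding how many vertices are lost at each step. A shortest cycle in a cubic multigraph has length only $O(\log n)$, and the clean-up is cheap, so after $k$ rounds only $O(k\log n)$ vertices have been spent; once $n\ge(4+o(1))k\log k$ (with $\log$ to base two, which is what the stated constant $4$ forces) this is still smaller than $n$, so all $k$ rounds can be carried out.

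\emph{Tools.} First I would record the usual Moore-type bound: a cubic multigraph on at least one vertex has minimum degree $3\ge 2$, hence contains a cycle, and running a breadth-first search from any vertex one sees that, as long as no cycle has been closed, level $d$ holds at most $3\cdot 2^{d-1}$ vertices, so an acyclic search of depth $d$ exhibits at most $3\cdot 2^{d}-2$ vertices; hence on $n$ vertices the search closes a cycle by depth $(1+o(1))\log_2 n$, producing a cycle of length $\ell\le(2+o(1))\log_2 n$. Second, I would show that extracting a shortest cycle $C$ (with $\ell=|V(C)|$) costs few vertices. Minimality forbids chords of $C$ and, in the multigraph setting, also parallel edges between two vertices of $C$ and loops on a vertex of $C$, each of which would give a shorter cycle (the handful of genuinely degenerate configurations force $n$ to be a small constant and are disposed of directly). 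So every vertex of $C$ sends exactly one edge out of $C$, and deleting $V(C)$ destroys exactly $2\ell$ edges. Now repeatedly suppress vertices of degree two and delete vertices of degree at most one until a cubic multigraph $G'$ on $n'$ vertices remains; each such operation removes one vertex and at most one edge, so at most $(n-\ell)-n'$ edges vanish between $G-V(C)$ and $G'$. Since $G$ and $G'$, being cubic, have $\tfrac32 n$ and $\tfrac32 n'$ edges, this gives $\tfrac32(n-n')\le 2\ell+\big((n-\ell)-n'\big)$, i.e. $n-n'\le 2\ell\le(4+o(1))\log_2 n$.

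\emph{Induction and lifting.} I would induct on $k$, the cases $k\le 1$ being immediate. Given $n\ge(4+o(1))k\log k$, form $C$ and $G'$ as above, so $n'\ge n-(4+o(1))\log_2 n$. In the relevant range $\log_2 n=\log_2 k+O(\log\log k)$, whereas $k\log k-(k-1)\log(k-1)=(1+o(1))\log k$ dominates any $O(\log\log k)$ term, so $n'\ge(4+o(1))(k-1)\log(k-1)$ and by induction $G'$ contains $k-1$ disjoint cycles. Finally I lift them: $G'$ is obtained from $G-V(C)$ by suppressions and deletions, so each edge of $G'$ represents a path of $G-V(C)$ (distinct edges representing internally disjoint paths), and a cycle of $G'$ expands to a cycle of $G-V(C)$; vertex-disjoint cycles of $G'$ are also edge-disjoint and each suppressed vertex lies on a unique edge of $G'$, so their expansions remain pairwise disjoint. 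Adjoining $C$ yields $k$ disjoint cycles of $G$.

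\emph{Main obstacle.} Each step is individually routine; the care is concentrated in two places. The first is the clean-up and lifting bookkeeping in a \emph{multi}graph: one must check that suppression (which can create loops and parallel edges) keeps the graph cubic, that ``at most one edge lost per vertex lost'' holds without exception, and that the small degenerate graphs produced by very short cycles are handled by hand. The second is the arithmetic that closes the recursion with the precise constant $4$: the $o(1)$ slack in the hypothesis must simultaneously absorb the additive $O(1)$ in the girth bound and the $O(\log\log k)$ gap between $\log n$ and $\log k$, which is exactly why the bound reads $(4+o(1))k\log k$ rather than $4k\log k$.
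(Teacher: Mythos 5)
The paper does not actually prove Lemma~\ref{framelem}; it cites it directly from Simonovits and uses it as a black box. So there is no ``paper's proof'' to compare against, and I am assessing your argument on its own terms.

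Your route is the standard (and, I believe, essentially Simonovits's own) argument: the girth of a cubic multigraph on $n$ vertices is at most $(2+o(1))\log_2 n$ by a breadth-first/Moore count, so one may greedily extract a shortest cycle, suppress/delete the low-degree debris to return to a cubic multigraph, and show by edge counting that each round costs at most $2\ell\le(4+o(1))\log_2 n$ vertices. The edge-counting step is the right way to make the clean-up rigorous in the multigraph setting, and the inequality $\tfrac32(n-n')\le 2\ell+\big((n-\ell)-n'\big)$ correctly yields $n-n'\le 2\ell$ regardless of the order or nature of the suppressions; it also, as you implicitly use, rules out $G'$ collapsing to nothing when $n\gg\ell$. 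The lifting argument (vertex-disjoint cycles of $G'$ are edge-disjoint, each suppressed vertex sits on a unique edge of $G'$, so the expanded cycles stay disjoint) is correct. The closing arithmetic, that the $o(1)$ slack must absorb both the additive $O(1)$ from the girth bound and the $O(\log\log k)$ gap between $\log_2 n$ and $\log_2 k$, is exactly where the $(4+o(1))$ rather than $4$ comes from; choosing the slack of order $\log\log k/\log k$ makes the induction close.

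Two small points worth tightening. First, your parenthetical ``the handful of genuinely degenerate configurations force $n$ to be a small constant'' is not right and is also not needed: a loop ($\ell=1$) or a parallel pair ($\ell=2$) as shortest cycle does not constrain $n$ at all, but in both cases one checks directly that deleting $V(C)$ removes at most $2\ell$ edges, so the edge-counting bound goes through unchanged; you should just say that. Second, in the clean-up you may encounter a degree-$2$ vertex whose both edge-ends are a single loop; ``suppression'' is not defined there, but deleting the vertex together with its loop still removes one vertex and one edge, so the $-1$ vertex / $\le -1$ edge invariant holds. With those two remarks made explicit, the proof is sound.
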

In the second step, Simonovits considered a maximal subgraph $F$ of the graph $G$ with all
degrees between $2$ and $3$. Then, if $F$ has many vertices of degree~$3$, 
the lemma yields $k$ disjoint cycles, and if not then the vertices of degree~$3$ will (almost)
be a \emph{hitting set}; that is, a set meeting all cycles.\footnote{We gloss over some technicalities here that, however, are not 
hard to resolve. In particular, components of $F$ that consists of single cycles need to be taken
care of, as well as cycles that meet $F$ in a single vertex.}

We think of the graph $F$ in Simonovits' argument as a \emph{frame}, a subgraph that
essentially  captures 
 all target objects, the cycles, but has a much simpler structure. 
In particular, the frame alone determines whether we find $k$ disjoint cycles or a hitting set, 
both of which can be (essentially) obtained directly from the frame. 

\new{
Pontecorvi and Wollan~\cite{PW12} extended the frame argument  
to capture also \emph{$A$-cycles}, cycles that each contain at least one vertex of a fixed set $A$.
Fiorini and Herinckx~\cite{FH14} used the technique in a different way to treat \emph{long cycles}.  
Bruhn, Joos and Schaudt~\cite{BJS14} further modified the technique so that 
it also applies to long $A$-cycles.
Finally, Mousset, Noever, \v{S}kori\'c, and Weissenberger~\cite{MNSW16} again refined the 
frame argument in order to achieve
an asymptotically tight bound (up to a constant factor) on the size of the hitting set for long cycles.
}
A frame also plays an important role in our article~\cite{FH14} on edge-disjoint long cycles; see Section~\ref{edgesec}.

We demonstrate here that a simple frame argument also works for Erd\H os-P\'osa type theorems 
about different variants of $A$-paths. 
In this case, trees with their leaves in $A$ will play the role of a frame.
We see the main merit of this article in discussing the frame argument and 
in showing how it can be used in a different context. 
Along the way, we will obtain several \new{new} results.

\bigskip

Gallai discovered that \emph{$A$-paths}, paths with first and last vertex but no interior vertex in $A$,
behave in a quite similar way as the cycles in Erd\H os and P\'osa's theorem.
\begin{theorem}[Gallai~\cite{Gal61}]\label{gallaithm}
For every positive integer $k$, every graph $G$ and 
every set $A\subseteq V(G)$,
 the graph $G$ either contains $k$ disjoint  
$A$-paths or a vertex set $X$ of size $|X|\leq 2k-2$ that meets every  $A$-path.
\end{theorem}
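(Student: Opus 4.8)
The plan is to prove this by the frame technique described above, using as frame a suitably chosen forest whose leaves lie in $A$. It suffices to show that if $G$ contains no $k$ disjoint $A$-paths then some set of at most $2k-2$ vertices meets every $A$-path. Writing $\nu(G,A)$ for the maximum number of disjoint $A$-paths and $\tau(G,A)$ for the minimum size of such a hitting set, this is the same as the inequality $\tau(G,A)\le 2\nu(G,A)$.

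First I would build the frame. After deleting the vertices of $A$ lying on no $A$-path, and similar harmless clean-up, let $F\subseteq G$ be a maximal forest with the property that $V(F)\cap A$ is precisely the set of leaves of $F$: every leaf of $F$ lies in $A$, no other vertex of $F$ does, and each nontrivial component of $F$, having at least two leaves, contains an $A$-path. Maximality forces the crucial ``capture'' property: every $A$-path $Q$ of $G$ meets $V(F)$, since otherwise $F\cup Q$ would be a strictly larger forest of the same kind; and a little more care with maximality restricts how $Q$ may thread through $F$ near its ends.

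With $F$ fixed, its structure alone settles the outcome, because forests are simple. On one hand, a tree carrying $b$ sufficiently separated branch vertices, each with two pendant leaves in $A$, contains $b$ disjoint $A$-paths, and disjoint trees add up. On the other, within a forest the minimum number of vertices meeting all $A$-paths equals the maximum number of disjoint $A$-paths (essentially because subtrees of a tree have the Helly property; a weaker linear bound, which also suffices, is an easy induction on forests). Hence if $F$ is rich enough to contain $k$ disjoint $A$-paths we are done; otherwise there is a set $Y\subseteq V(F)$ with $|Y|\le k-1$ meeting every $A$-path that lies inside $F$. It then remains to enlarge $Y$ to a hitting set of \emph{all} $A$-paths of $G$, which I would do by re-routing: an $A$-path $Q$ avoiding $Y$ enters and leaves $V(F)$, and replacing the initial and final pieces of $Q$ by pieces running inside $F$ turns $Q$ into an $A$-path of $F$, which $Y$ hits --- unless the re-routing is blocked, a failure chargeable to at most $|Y|$ further vertices. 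This yields a hitting set of size at most $2|Y|\le 2(k-1)$.

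I expect the re-routing step, together with the control of how $A$-paths meet the frame, to be the main obstacle: one must explain why an $A$-path of $G$ cannot slip through the interior of the frame while dodging $Y$, i.e.\ make ``re-route inside $F$'' precise and bound the obstructions. This step also governs the final constant --- a coarse count already yields a short proof of a bound linear in $k$, which is the trade-off the introduction alludes to, while recovering Gallai's optimal $2k-2$ requires running the bookkeeping carefully.
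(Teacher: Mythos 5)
The paper does not actually prove the stated bound of $2k-2$; it explicitly proves a weaker version with hitting set of size at most $4k$ (the proof in Section~\ref{sec:Gallai} is headed ``Proof of Theorem~\ref{gallaithm} with hitting set of size at most $4k$''). Your proposal aims for the sharp constant, and the decisive step --- the ``re-routing'' argument that turns a hitting set $Y$ of the $A$-paths inside the frame $F$ into a hitting set of all $A$-paths of $G$ at a cost of only $|Y|$ extra vertices --- is precisely the part you leave unresolved. That is a genuine gap, not mere bookkeeping: it is not clear what the re-routing operation is, why an $A$-path that meets $F$ only in its interior could be re-routed to an $A$-path of $F$ while avoiding $Y$, nor why the ``obstructions'' can be charged to only $|Y|$ additional vertices. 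As written, the proposal does not establish any explicit bound.

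The paper takes a cleaner, more local route that sidesteps this entirely. It uses a \emph{subcubic} maximal forest $F$ with leaves exactly the vertices in $A\cap V(F)$, and then defines $X$ directly from the structure of $F$: $X=(A\cap V(F))\cup\{\text{degree-$3$ vertices of }F\}$. Subcubicity is doing real work here: it gives $|\{\text{degree-$3$ vertices}\}|=|\text{leaves}|-2$ per component, hence $|X|<4k$; and it forces any $A$--$F$-path $P'$ avoiding $X$ to end at a degree-$2$ vertex of $F$, so $F\cup P'$ is again a valid frame, contradicting maximality. No hitting-set duality inside $F$ and no re-routing are needed. Your frame is essentially the same object minus the subcubicity constraint, but you then appeal to a much stronger fact ($\tau=\nu$ for $A$-paths in a forest, via the Helly property / perfection of subtree intersection graphs) and still need a second, unproved, step to transfer the hitting set out of $F$. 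If you are willing to settle for $O(k)$, as the paper does, the maximality-contradiction is both shorter and fully rigorous; if you want $2k-2$, the matching-theoretic proofs via the Tutte--Berge formula (cf.\ Schrijver) are the known route, and the re-routing idea would need to be made precise and quantified before it could compete.
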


A class of objects, such as $A$-paths or cycles,
has the \emph{Erd\H os-P\'osa property} if they satisfy a theorem similar to Theorems~\ref{epthm} and~\ref{gallaithm}:
namely that in every graph there are either $k$ disjoint such objects or that there is a set of vertices
that meets each target object and whose size is bounded by function that depends only on $k$.\footnote{That the notion 
is somewhat vague is on purpose. We want it to cover types of subgraphs, such as cycles, even cycles and so on, 
as well as subgraphs with additional structure, such as $A$-paths. To squeeze all these into one formally correct
notion seems too much effort for the little benefit.}  

Thus, cycles and $A$-paths have the {Erd\H os-P\'osa property}, but also many other graph classes
such as even cycles~\cite{Tho88}, graphs that contract to a fixed planar graph~\cite{RS86}, or different variants of $A$-paths~\cite{CGGGLS06,GGRSV09,KKM11,Wol10}.

\label{labelledGraphs}
Very general types of $A$-paths can be realised by \new{labeling} the edges of the graph 
with \new{elements} from an (abelian) group~$\Gamma$.
There are at least \new{two ways to do that}. 
\new{
In the simpler way,  \emph{undirected group labelings}, 
every edge $e$ receives a label $\gamma(e)\in\Gamma$. 
A path $P$ is then said to be \emph{non-zero} if the sum of its edge labels is non-zero in $\Gamma$.
If, for instance, the group $\Gamma$ is $\mathbb Z_2$ and every edge receives a label of~$1$,
then the non-zero paths are simply the paths of odd length.
With respect to undirected labelings, 
Wollan~\cite{Wol10} proved:
}

\begin{theorem}[Wollan~\cite{Wol10}]\label{wolthm}
For every graph $G$, vertex set $A\subseteq V(G)$, 
abelian group $\Gamma$ and undirected $\Gamma$-edge labeling 
of $G$, there are either $k$ disjoint non-zero $A$-paths or 
a vertex set of size $O(k^4)$ vertices that meets every non-zero $A$-path.
\end{theorem}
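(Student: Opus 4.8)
The plan is to carry the frame technique over to the group-labeled setting, with forests whose only vertices in $A$ are their leaves playing the role of frames. Write $\gamma(P)$ for the sum of the edge labels of a path $P$, and call such a tree $T$ \emph{balanced} if no leaf-to-leaf path of $T$ is non-zero (equivalently, if no $A$-path of $T$ is non-zero). Rooting a balanced $T$ at a vertex $r$ and putting $\phi(v)=\gamma(rTv)$ for the $r$--$v$ path $rTv$ in $T$ yields $\phi(a)+\phi(b)=2\phi(w)$ for every pair of leaves $a,b$ with least common ancestor $w$; so, as far as non-zero $A$-paths are concerned, a balanced tree behaves like a single vertex carrying a residual labeling on the edges leaving it. The first thing I would prove is a lemma making this precise: a balanced subtree of $G$ can be reduced --- contracted to a single vertex in the easiest cases, and more carefully in general --- with the labeling adjusted so that the non-zero $A$-paths of the new graph correspond to those of $G$; getting this reduction right in the presence of $2$-torsion in $\Gamma$ is already delicate. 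Applying it exhaustively, we may assume that every subtree of $G$ with more than one vertex whose only $A$-vertices are its leaves already contains a non-zero $A$-path.

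Next I would set up the frame: let $F\subseteq G$ be maximal among forests whose components are trees with all leaves, and only leaves, in $A$. The exchange argument behind Simonovits' frame for cycles goes through: a non-zero $A$-path disjoint from $F$ would extend $F$ by a new component, and one meeting $F$ in a single vertex must meet it there at one of its $A$-endpoints, which then sits at a leaf of $F$. Hence every non-zero $A$-path is captured by $F$ --- it uses leaves of $F$ only as endpoints, and away from those leaves and the branch vertices of $F$ it touches $F$ only along the bare paths between branch vertices --- so any hitting set may be sought within $V(F)$ (plus a bounded number of endpoints per component). The problem is thereby reduced to showing, for the frame together with the \emph{exit} paths that leave it at leaves, that it contains either $k$ disjoint non-zero $A$-paths or a bounded hitting set for them. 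This is a tree-like situation, which I would treat by induction along the trees of $F$: deal with a branch vertex or a path-like component, pass to the pieces it splits off, and either accumulate $k$ disjoint non-zero $A$-paths or a bounded hitting set; because balanced trees have been contracted away, a large piece is guaranteed to help, rather than stall, this induction.

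The main obstacle is quantitative, and it lies in the interface between the two steps. When a balanced subtree is contracted, non-zero $A$-paths that ran through it are rerouted through the new vertex, and keeping this under control --- across all the contractions, simultaneously with the induction along the frame and with a Gallai-style argument (cf.\ Theorem~\ref{gallaithm}) used to handle the interaction along bare paths --- is where the loss accumulates. Each of these mechanisms contributes a factor, which is what turns the linear bound $2k-2$ of Gallai's theorem into a polynomial; getting the bookkeeping down to the polynomial $O(k^4)$ claimed, rather than a larger power, is the delicate part, and where I expect most of the effort in a complete proof to go.
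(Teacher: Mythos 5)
Note first that the paper does not prove Theorem~\ref{wolthm}; it is stated as a cited result of Wollan~\cite{Wol10}, and the frame technique developed here is applied only to the other $A$-path variants (plain, long, even), not to non-zero $A$-paths over a general abelian group. The obstacle to doing so is precisely where your sketch breaks down.

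The frame arguments in Sections~\ref{sec:Gallai}--\ref{evensec} work because, once the frame $F$ is fixed, a large component of $F$ automatically packs many disjoint target paths: Lemma~\ref{l2lpaths} turns $p$ leaves into $\lfloor p/2\rfloor$ disjoint leaf-to-leaf paths, and in the even case one first restricts to a bipartition class. For non-zero $A$-paths this step collapses, and contracting balanced subtrees does not repair it. Take a star with center $c$ and leaves $a_1,\ldots,a_n\in A$, label $ca_1$ by $1\in\mathbb Z$ and every other edge $ca_j$ by $0$. This tree is \emph{not} balanced, yet the non-zero $A$-paths are exactly the $a_1ca_j$ with $j\neq 1$, all of which share $a_1$ and $c$; so however large $n$ is, there are no two disjoint non-zero $A$-paths in it. Your closing claim that, once balanced trees have been eliminated, a large piece ``is guaranteed to help, rather than stall, this induction'' is therefore false: the Gallai-style counting in the packing direction simply does not transfer, and no amount of bookkeeping recovers a polynomial bound from it.

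The balanced-tree reduction itself is also not sound as stated. You define $\phi$ along paths inside $T$ and reweight edges leaving $T$ accordingly, but a non-zero $A$-path of $G$ may intersect a balanced subtree $T$ in several disjoint segments; after contracting $T$ to a vertex $v$, such a path becomes a closed walk through $v$, not a path, and its weight is not recovered by the residual labeling. So the claimed correspondence between non-zero $A$-paths before and after contraction fails exactly in the configurations the contraction is supposed to simplify. Wollan's proof does not proceed by a frame argument at all: it is a global argument that produces either a small hitting set or a structured substructure forcing $k$ disjoint non-zero $A$-paths, and the $O(k^4)$ bound comes from that machinery, not from an analogue of Lemma~\ref{l2lpaths}.
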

\new{
Chudnovsky et al.~\cite{CGGGLS06} investigated \emph{directed group labelings}.
In contrast to undirected labelings, an edge $e$ of a path is 
counted with weight $\gamma(e)$ if it is traversed 
in direction of a fixed reference orientation, and with weight $-\gamma(e)$ otherwise. 
Chudnovsky et al.\ prove a result similar to Theorem~\ref{wolthm} but with 
a much smaller hitting set, namely one of size at most~$2k-2$,
which is clearly optimal. 
}

Both results have \new{interesting} consequences: 
odd $A$-paths have the Erd\H os-P\'osa property (Geelen et al.~\cite{GGRSV09}),
 \new{and so do} \emph{$A$--$B$--$A$-paths},
\new{i.e.\ $A$-paths}  that each meet some vertex from a second vertex set~$B$
(see also Kakimura, Kawarabayashi, and Marx~\cite{KKM11}).

Other extensions of Gallai's theorem concern directed $A$-paths (Kriesell~\cite{Kri05})
and edge-disjoint $A$-paths (Mader~\cite{Mad78}). 

In this article, 
we \new{apply} the frame argument to show that $A$-paths (Section~\ref{sec:Gallai}), 
long $A$-paths (Section~\ref{longsec}), 
even $A$-paths (Section~\ref{evensec}) as well as 
certain trees with their leaves in $A$
(Section~\ref{sec:Atrees}) have the Erd\H os-P\'osa
property.
We also discuss types of $A$-paths that do not have the Erd\H os-P\'osa property.
These are $A$-paths with certain, more complicated, modularity constraints (Section~\ref{evensec})
as well as directed even $A$-paths or directed $A$--$B$--$A$-paths (Section~\ref{secdirect}).
In Section~\ref{edgesec} we turn to the edge-version of the Erd\H os-P\'osa property. 
In particular, we give a simple proof of 
 Mader's theorem about edge-disjoint $A$-paths (but with a worse bound),
and we show that neither even $A$-paths nor $A$--$B$--$A$-paths have the edge-Erd\H os-P\'osa
property. To the best of our knowledge this is the first example of a class 
that possesses the vertex-Erd\H os-P\'osa property but not the edge-Erd\H os-P\'osa property.

\section{Gallai's theorem}\label{sec:Gallai}

While Gallai's original proof of his theorem was quite technical, there are very nice proofs
that reduce the problem to matchings and in particular to the Tutte-Berge formula; see Schrijver~\cite{Sch01}.

We give an alternative proof of Gallai's theorem, albeit with a slightly worse \new{bound on the } size of the hitting 
set. Gallai's bound of $2k-2$, on the other hand,  is optimal as can be seen by considering
a complete graph on $2k-1$ vertices, all in $A$. 

Still, because the proof is quite simple and because it serves as a model for the other results later, 
we find it worth the effort. 
First, we set up the frame, the structure that captures the essence of the $A$-paths in the graph. 
The frame could not be much simpler: it is a tree with all its leaves in $A$. 
The following easy lemma plays the same role as Lemma~\ref{framelem} in Simonovits' proof.

\begin{lemma}\label{l2lpaths}
Every subcubic tree with $p$ leaves contains $\lfloor\tfrac{p}{2}\rfloor$ disjoint
leaf-to-leaf paths.
\end{lemma}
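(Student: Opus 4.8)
The plan is to induct on the number of leaves $p$ (equivalently, on $|V(T)|$). The base cases $p \le 1$ are trivial, and $p = 2$ is immediate since a subcubic tree with exactly two leaves is itself a path. For the inductive step with $p \ge 3$, I would locate a vertex $v$ that is ``deepest'' among the branching vertices: root $T$ arbitrarily and pick a vertex $v$ of degree $3$ (a branch vertex) all of whose descendants in $T$, apart from $v$ itself, have degree at most $2$. Such a $v$ exists because $p \ge 3$ forces at least one branch vertex, and among branch vertices one can always choose one farthest from the root. Then the component(s) of $T - v$ lying below $v$ are two leaf-paths hanging off $v$, say $Q_1$ and $Q_2$, ending in leaves $\ell_1, \ell_2$ of $T$.

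The key step is to contract or delete cleverly so that the induction applies. I would take the path $P = Q_1 \cup \{v\} \cup Q_2$ from $\ell_1$ to $\ell_2$ as one of the desired leaf-to-leaf paths, and then set $T' := T - (V(Q_1) \cup V(Q_2))$, i.e. delete the two pendant paths together with $v$. Now $T'$ is again a tree; it is still subcubic (deleting vertices cannot raise degrees). Its leaf count: $v$ had a unique neighbour $u$ outside $Q_1 \cup Q_2$ (since $\deg v = 3$ and two of its neighbours lead into $Q_1, Q_2$), and in $T'$ the vertex $u$ has lost exactly one neighbour, so its degree drops by $1$. Whether or not $u$ becomes a new leaf, the number of leaves of $T'$ is at most $p - 2$: we removed the two leaves $\ell_1, \ell_2$ and created at most one new leaf (namely possibly $u$). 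Hence $T'$ has at most $p-2$ leaves, actually exactly $p-1$ or $p-2$, and in either case $\lfloor \tfrac{p'}{2}\rfloor \ge \lfloor\tfrac{p}{2}\rfloor - 1$. Applying the inductive hypothesis to $T'$ yields $\lfloor\tfrac{p'}{2}\rfloor \ge \lfloor\tfrac{p}{2}\rfloor - 1$ disjoint leaf-to-leaf paths in $T'$, and these are vertex-disjoint from $P$ since $V(T') \cap V(P) = \emptyset$. Together with $P$ this gives $\lfloor\tfrac{p}{2}\rfloor$ disjoint leaf-to-leaf paths in $T$.

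One technical point deserves care: a leaf-to-leaf path of $T'$ need not remain a leaf-to-leaf path of $T$, because the new leaf $u$ of $T'$ is not a leaf of $T$. This is the main obstacle. To handle it, I would note that if a path found in $T'$ ends at $u$, it can be extended within $T$ along the (unique) branch leading from $u$ toward a leaf of $T$ that is not already used — but this extension could collide with previously chosen paths, so a cleaner fix is to strengthen the induction: prove instead that in any subcubic tree, and for any prescribed leaf $w$, there are $\lfloor \tfrac{p}{2}\rfloor$ disjoint leaf-to-leaf paths avoiding $w$ when $p$ is even, or $\lfloor\tfrac p2\rfloor$ of them with at most one ending at $w$ when $p$ is odd. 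Alternatively, and more simply, one avoids the issue entirely by choosing $v$ to be a branch vertex two of whose three ``sides'' are leaf-edges (a pendant ``cherry''), which exists whenever $T$ has a branch vertex at all by taking $v$ deepest; then $Q_1, Q_2$ are single edges, $T' = T - \{\ell_1, \ell_2\}$ keeps $v$ (now of degree $1$, a genuine leaf of $T'$), and $T'$ has exactly $p - 1$ leaves, all of which are leaves of $T$ except $v$. One more round of the same reduction removes $v$, and bookkeeping the parity gives the floor bound. I would present the cherry version, as it keeps every intermediate leaf a genuine leaf of the original tree except for a single controlled vertex, making the disjointness and the leaf-to-leaf property transparent.
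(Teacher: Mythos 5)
Your approach follows the same strategy as the paper (remove a ``cherry'' near a deepest branch vertex and induct), and you correctly identify the central obstacle: deleting the cherry can produce a new degree-$1$ vertex in $T'$ that is not a leaf of $T$, so paths of $T'$ ending there cannot be counted. Neither of your two fixes closes this gap, however. The strengthened induction hypothesis you sketch (``$\lfloor p/2\rfloor$ disjoint leaf-to-leaf paths avoiding a prescribed leaf $w$ when $p$ is even'') cannot hold: $p/2$ vertex-disjoint leaf-to-leaf paths have $p$ distinct leaf endpoints and therefore use \emph{every} leaf, including $w$. And the claim that a deepest branch vertex $v$ always has two leaf neighbours is false without preprocessing: a deepest degree-$3$ vertex can be adjacent to long degree-$2$ paths ending in leaves rather than to the leaves themselves. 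Even if a genuine cherry is present, ``one more round of the same reduction removes $v$'' is not an argument --- after deleting $\ell_1,\ell_2$, the vertex $v$ is a leaf of $T'$ that need not lie in any cherry of $T'$, and a leaf-to-leaf path of $T'$ ending at $v$ is simply not a leaf-to-leaf path of $T$.

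The missing step, which the paper supplies at the outset, is to first contract away all degree-$2$ vertices, observing that this yields a tree whose leaf-to-leaf paths correspond bijectively to those of $T$. After this reduction every internal vertex has degree $3$, so a deepest degree-$3$ vertex $t$ really is a cherry vertex, and --- crucially --- its third neighbour $s$ also has degree $3$. Deleting the whole path $\ell_1 t \ell_2$ (including $t$) then drops $s$ to degree $2$, not degree $1$: no new leaf is created, $T'$ has exactly $p-2$ leaves all of which are leaves of $T$, and the induction closes cleanly. With that preprocessing in place, your ``delete the cherry together with $v$'' step becomes exactly the paper's proof.
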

\begin{proof}
Let $T$ be a subcubic tree with $p$ leaves.
We \new{proceed by} induction on the number of leaves. 
 By contracting the edges incident with vertices of degree~$2$,
we obtain a new tree whose leaf-to-leaf~paths are in direct correspondence to 
leaf-to-leaf~paths in $T$. We thus may assume that $T$ contains no vertices of degree~$2$.

If $T$ has at most three leaves, the statement is obviously true. Assume $T$ to have at least four leaves,
and pick  a root~$r$. Choose a vertex $t$ of degree~$3$ that is farthest from $r$. 
Then $t$ is adjacent to two leaves~$\ell_1$ and $\ell_2$ and one non-leaf $s$ (as $T$ has at least four leaves). 
We remove the path $P=\ell_1t\ell_2$ from $T$ 
and obtain a new tree $T'$. 
As $s$ has degree $3$ in $T$, this implies that the sets of leaves of $T$ and of $T'$ 
differ only in $\ell_1$ and $\ell_2$. 
Inductively, we  obtain $\lfloor\tfrac{p-2}{2}\rfloor$ disjoint leaf-to-leaf
paths in $T'$ and thus in $T$. Together with $P$ we find the desired paths.
\end{proof}

We prove Gallai's theorem with a hitting set of size at most~$4k$.

\begin{proof}[Proof of Theorem~\ref{gallaithm} with hitting set of size at most $4k$]
Let \new{$F\subseteq G$} be a forest maximal under inclusion such that
\begin{itemize}
\item $F$ \new{is subcubic with no isolated vertices}; and 
\item every leaf of $F$ lies in $A$, and every vertex in $A\cap V(F)$ is a leaf of $F$.
\end{itemize}
Let $c$ be the number of components of $F$.
If $F$ contains $2k+c$ vertices of $A$, then by applying Lemma~\ref{l2lpaths} to each component
of $F$ we obtain $k$ disjoint leaf-to-leaf paths in $F$, each of which is an $A$-path.
\comment{Indeed, if $F$ has $k$ or more components then, choosing one $A$-path from every component, 
we obtain $k$ disjoint $A$-paths. But if the number $c$ of components of $F$ is smaller than~$k$
then Lemma~\ref{l2lpaths} yields at least $\tfrac{1}{2}(|A\cap V(F)|-c)>(3k-k)/2=k$ disjoint $A$-paths.}

Thus, assume that $|A\cap V(F)|<2k+c$. Let $X$ be the union of $A\cap V(F)$ together with 
all vertices of degree~$3$ in $F$. 
In every subcubic tree, the number of vertices of degree~$3$ equals the number of leaves minus $2$. 
Thus
\[
|X|<2|A\cap V(F)|-2c<4k.
\]
We claim that $X$ meets every $A$-path in $G$. Suppose that $P$ is an $A$-path that is disjoint from $X$. 
It has to intersect $F$ since otherwise $F\cup P$ would contradict the choice of $F$. Let $P'$ be the 
initial segment of $P$ that is an $A$--$F$-path, and observe that $P'$ has length at least~$1$ since
every vertex in $A\cap V(F)$ lies in $X$, and that $P'$ does not end in a leaf of $F$ since every leaf
of $F$ lies in $X$ as well. Moreover, $P'$ does not end in a vertex of degree~$3$ either, since these also
all lie in $X$. The path $P'$ therefore ends in a vertex of degree~$2$, which means that $F\cup P$
satisfies the same conditions as $F$ and thus contradicts the choice of~$F$.
\end{proof}

\section{Long $A$-paths}\label{longsec}

For a fixed positive integer $\ell$ a cycle is \emph{long} if 
its length is at least~$\ell$. 
Long cycles have the Erd\H os-P\'osa property~\cite{BBR07,FH14,MNSW16,RS86}.
\new{Analogously}, we say a path is \emph{long} if its length is at least~$\ell$. 
Here we give a  short proof that long $A$-paths also have the Erd\H os-P\'osa property.

\begin{theorem}\label{longthm}
For every positive integer $k$, every graph $G$ and 
every set $A\subseteq V(G)$, the graph $G$ either contains $k$ disjoint long 
$A$-paths or a vertex set $X$ of size $|X|\leq 4k\ell$ that meets every long $A$-path.
\end{theorem}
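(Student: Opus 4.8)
The strategy is to mimic the proof of Gallai's theorem given above, replacing the tree-frame by a frame that is tailored to \emph{long} $A$-paths. The natural candidate is a subcubic forest $F \subseteq G$ that is maximal (under inclusion) subject to: $F$ has no isolated vertices; every leaf of $F$ lies in $A$, and every $A$-vertex of $F$ is a leaf; and — this is the new ingredient — every leaf-to-leaf path of $F$ has length at least $\ell$. Equivalently, one can ask that each component of $F$, after suppressing degree-$2$ vertices, is a subcubic tree in which every edge (i.e.\ every branch between two branch-vertices or leaves) has length at least $\ell$. First I would argue that if $F$ has many $A$-vertices then we win: applying Lemma~\ref{l2lpaths} to each component yields $\lfloor\frac{p}{2}\rfloor$ disjoint leaf-to-leaf paths with $p = |A \cap V(F)|$ summed over components (adjusted by the number $c$ of components), and each such path is automatically a \emph{long} $A$-path by the defining property of the frame. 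So if $|A \cap V(F)| \geq 2k + c$ we obtain $k$ disjoint long $A$-paths, exactly as before.

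The remaining case is $|A \cap V(F)| < 2k + c$. Here I would take $X$ to be the union of $A \cap V(F)$, the degree-$3$ vertices of $F$, and — the new piece — the \emph{interiors} of all the branches of $F$ (the suppressed degree-$2$ chains), but only up to length roughly $\ell$ per branch; more precisely, for each branch I put into $X$ all of its vertices if the branch has length at most, say, $2\ell$, or a suitable set of $O(\ell)$ vertices otherwise. Since a subcubic tree with $q$ leaves has $q-2$ branch-vertices and at most $2q-3$ branches, and each branch contributes $O(\ell)$ vertices to $X$, summing over components gives $|X| = O(k\ell)$; the constants need to be chased to land at the claimed $4k\ell$, and I expect the honest bookkeeping (number of branches versus leaves, the $+c$ correction, the per-branch allowance) to be the fiddly-but-routine part.

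The key step — and the main obstacle — is showing that this $X$ is a hitting set for long $A$-paths, i.e.\ that a long $A$-path $P$ avoiding $X$ contradicts maximality of $F$. As in Gallai's proof, $P$ must meet $F$ (else $F \cup P$ is a larger valid frame), so let $P'$ be the initial $A$–$F$ subpath of $P$. The same reasoning as before shows $P'$ has length $\geq 1$ and does not end in an $A$-vertex, a degree-$3$ vertex, or — and this is where the branch-interiors in $X$ pull their weight — too close to either endpoint of a branch: whatever branch-vertex $P'$ lands on is at distance $> \ell$ (in $F$) from each of the two branch-endpoints along that branch. Splitting that branch at the landing point and attaching $P'$, I would argue that $F$ can be rerouted so that both new leaf-to-leaf paths still have length $\geq \ell$ while the endpoint of $P'$ in $A$ is added as a new leaf — contradicting maximality. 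The one subtlety to handle carefully is that $P$ itself might re-enter $F$ later, so one should really work with $P'$ alone and check that $F \cup P'$ (suitably interpreted as a forest) satisfies all three frame conditions; isolated degenerate cases (a component of $F$ that is a single branch, or $P'$ attaching near a leaf) are disposed of exactly as in the footnote to Simonovits' argument.
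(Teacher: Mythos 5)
Your overall strategy matches the paper's: a maximal subcubic forest-frame with leaves in $A$, Lemma~\ref{l2lpaths} for the abundant case, and maximality of the frame for the hitting-set case. The hitting-set argument you sketch is also sound in spirit: if $P'$ lands on a degree-$2$ vertex $u$ far (in $F$) from every leaf, then adding $P'$ gives a valid, larger frame, because every new leaf-to-leaf path in $F\cup P'$ must pass through $u$ on its way to a leaf of $F$ and hence has length at least $\ell$.

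Two points deserve flagging. First, a small but real error: your frame condition ``every leaf-to-leaf path has length $\geq \ell$'' is correct and is what the paper uses (phrased as ``every $A$-path in $F$ is long''), but your claimed ``equivalently, every branch has length $\geq \ell$'' is not equivalent --- it is strictly stronger. A star with edges of length $\ell/2$ has all its leaf-to-leaf paths long but all branches short. You do not actually rely on the stronger form, so this is cosmetic, but it should be deleted.

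Second, and more substantively, your choice of $X$ will not land at $4k\ell$. You protect the $\ell$-neighbourhood of \emph{every} branch-endpoint, including the degree-$3$ vertices. A subcubic tree with $q$ leaves has $2q-3$ branches, so at $2\ell$ interior vertices per branch you pay roughly $4\ell q$ per component; after summing this comes to about $8k\ell$, twice the claimed bound. The paper avoids this by observing that the frame condition only needs $u$ to be far from the \emph{leaves}: it defines $X$ as the degree-$3$ vertices together with all vertices of $F$ at $F$-distance at most $\ell-1$ from some vertex of $A\cap V(F)$. The size is then bounded by assigning each such vertex to a nearest leaf; the assigned sets form subtrees, and an easy induction gives $|V(T)|\leq \ell+(\ell-1)m$ for a subtree with $m$ degree-$3$ vertices. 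Summing and using that the number of degree-$3$ vertices equals (number of leaves)$\,-\,2$ per component yields $|X|\leq \ell(2|A\cap V(F)|-2c)<4k\ell$. So the missing ingredient is precisely this leaner $X$ together with the subtree-counting argument; with your $X$ as described you would only obtain an $O(k\ell)$ bound with a larger constant.
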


For the proof we adapt, in a fairly straightforward way, a technique of Fiorini and Herinckx~\cite{FH14}
and combine it with the forest-frames of the previous section.
\begin{proof}
Let \new{$F\subseteq G$} be a forest maximal under inclusion such that
\begin{itemize}
\item $F$ \new{is subcubic with no isolated vertices}; 
\item every leaf of $F$ lies in $A$, and every vertex in $A\cap V(F)$ is a leaf of $F$; and
\item every $A$-path in $F$ is long. 
\end{itemize}
Let $c$ be the number of components of $F$.
If $F$ contains $2k+c$ vertices of $A$, then Lemma~\ref{l2lpaths} applied to each component of~$F$
yields $k$ disjoint leaf-to-leaf paths in~$F$, each of which is a long $A$-path.

Thus, assume that $|A\cap V(F)|<2k+c$. 
Let $U$ be the set of vertices of degree~$3$ in~$F$. 
We extend $U$ to a vertex set $X$ as follows:
for every vertex 
in $A\cap V(F)$,
we add all vertices in $F$ at distance at most $\ell-1$ in $F$.

To bound~$|X|$ from above, assign every vertex of degree $2$ and $3$ in $F$ at distance at most $\ell-1$ to a closest leaf (\new{which is in $A$ by definition})
and in case there are multiple choices decide according to some ordering of $A$.
Observe that this results in an assignment such that all vertices that are assigned to a specific leaf 
induce a subtree of $F$.
Now, let $T$ be such a subtree, \new{and pick a root $r\in A\cap V(T)$}.
By induction on the number $m$ of vertices of degree $3$ in $T$ we prove that $|V(T)|\leq \ell + (\ell-1)\cdot m$.
If $m=0$, we have $|V(T)|\le \ell$.
Otherwise, let $p$ be a leaf in $T$ distinct from $r$. 
The $p$--$r$-path $P$ contains a vertex of degree~$3$ in $T$. 
\new{Starting from $p$,} let $u$ be the first vertex of degree 3 in $P$ and let $v$ be the predecessor of $u$
\new{in $P$}.
Deleting $pPv$ yields a subtree where $u$ has no longer degree~$3$.
Using the induction hypothesis leads to the desired result.
Hence
\begin{align*}
	|X|&\leq \ell |A\cap V(F)| + \ell |U| \\
	&=\ell(|A\cap V(F)| +|A\cap V(F)|-2c)\\
	&< 4k\ell
\end{align*}
where we have also used that the number of leaves minus $2$ equals the number of vertices of degree~$3$ in every component of $F$.

Suppose that there is a long $A$-path $P$ that is disjoint from $X$. 
By the maximal choice of $F$, the path $P$ meets $F$. 
Let $P'$ be the initial segment
of $P$ that is an $A$--$F$-path. 
Note that $P'$  ends in a vertex that has degree~$2$ in $F$,
as $(V(F)\cap A)\cup U\subseteq X$.
Finally, let $Q$ be an  $A$-path contained in $F\cup P'$ 
that starts in the $A$-vertex of $P'$ and ends in $a\in A\cap V(F)$.
Since $X$ contains all vertices of $F$ at distance at most~$\ell-1$ in $F$ to $a$, 
it follows that $Q$ is a long $A$-path.
Thus $F\cup P'$ contradicts the 
choice of $F$.
\end{proof}

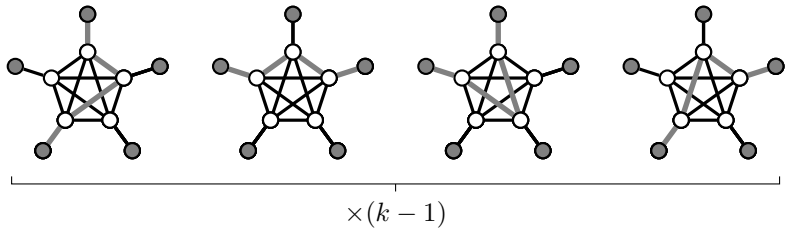
\begin{figure}[htb]
\centering
\begin{tikzpicture}
\tikzstyle{apath}=[line width=2pt,color=grau]

\def\krad{0.5}
\def\angle{72}
\def\shift{2.7}

\def\singlecomp{
\foreach \i in {0,...,4}{
  \foreach \j in {\i,...,4}{
  \begin{scope}[on background layer]
    \draw[hedge] (270-\angle/2+\i*\angle:\krad) -- (270-\angle/2+\j*\angle:\krad);
  \end{scope}
  \node[hvertex] (v\i) at (270-\angle/2+\i*\angle:\krad){};
  \node[hvertex,fill=grau] (a\i) at (270-\angle/2+\i*\angle:2*\krad){};
  \draw[hedge] (a\i) -- (v\i); 
}}
}

\singlecomp
\draw[apath] (a0) -- (v0) -- (v2) -- (v3) -- (a3);

\begin{scope}[shift={(\shift,0)}]
\singlecomp
\draw[apath] (a2) -- (v2) -- (v3) -- (v4) -- (a4);
\end{scope}

\begin{scope}[shift={(2*\shift,0)}]
\singlecomp
\draw[apath] (a4) -- (v4) -- (v1) -- (v3) -- (a3);
\end{scope}

\begin{scope}[shift={(3*\shift,0)}]
\singlecomp
\draw[apath] (a0) -- (v0) -- (v3) -- (v2) -- (a2);
\end{scope}

\draw[thin] (-2*\krad,-2.5*\krad+0.1) -- ++(0,-0.1) -- coordinate[midway] (m) (3*\shift+2*\krad,-2.5*\krad) -- ++(0,0.1); 
\draw[thin] (m) -- ++(0,-0.1) node[below]{$\times(k-1)$};

\end{tikzpicture}
\caption{Example for long $A$-paths, with vertices in $A$ in grey and $\ell=4$; 
at most $k-1$ disjoint
long $A$-paths, 
yet every hitting set needs to contain $\ell-1$ vertices from each component}\label{evenfig}
\end{figure}

How tight is the bound on the hitting set? The bound is likely not optimal, but has the right 
order of magnitude. 
To obtain a 
lower bound, take $k-1$ disjoint copies of a complete graph on $2\ell-3$ vertices whose 
vertices are matched to $2\ell-3$ vertices in $A$; see Figure~\ref{evenfig} for an 
example with $\ell=4$ and $k=5$. Then in each component of the graph, there are no two disjoint
long $A$-paths. A hitting set for long $A$-paths, on the other hand, contains at least $\ell-1$
vertices from each component. Thus, there are no $k$ disjoint long $A$-paths in the whole graph,
while the smallest hitting set has size $(k-1)(\ell-1)$.
We believe that in every graph there should be a hitting set of size at most $k\ell$. 
Our hitting set size of $4k\ell$ is only a bit off from that. 

The construction is quite similar to one proposed by Fiorini and Herinckx~\cite{FH14} for long cycles. 

\section{Even $A$-paths}\label{evensec}

The results of  Chudnovsky et al.\ or of Wollan (Theorem~\ref{wolthm}) discussed in the introduction 
imply in particular that odd $A$-paths have the Erd\H os-P\'osa property. 
What about even $A$-paths? For cycles, there is a difference between even and odd cycles. 
The former have the Erd\H os-P\'osa property, the latter do not. Interestingly, parity makes 
no difference for $A$-paths.
Again, we \new{apply} the frame argument in the proof of the theorem.

\begin{theorem}\label{eventhm}
For every positive integer $k$, every graph $G$ and 
every set $A\subseteq V(G)$, 
the graph $G$ either contains $k$ disjoint even 
$A$-paths or a vertex set $X$ of size $|X|\leq 10k$ that meets every even $A$-path.
\end{theorem}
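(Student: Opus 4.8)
The plan is to run the frame argument of the previous sections once more, taking as frame a subcubic forest whose leaves are exactly the vertices of $A$ it contains, subject to one extra, parity-driven condition: \emph{no component of the frame is a path of odd length}. Concretely, I would choose $F\subseteq G$ maximal under inclusion such that $F$ is subcubic with no isolated vertices, every leaf of $F$ lies in $A$ and every vertex of $A\cap V(F)$ is a leaf of $F$, and no component of $F$ is a path of odd length. Let $c$ be the number of components of $F$. Since in a subcubic tree the number of vertices of degree~$3$ equals the number of leaves minus~$2$, and since the leaves of $F$ are precisely the vertices of $A\cap V(F)$, the forest $F$ has exactly $|A\cap V(F)|-2c$ vertices of degree~$3$.

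Next I would prove the analogue of Lemma~\ref{l2lpaths}: every subcubic tree $T$, with its leaves $2$-coloured by the unique bipartition of $T$ into $p_0$ leaves of one colour and $p_1$ of the other, contains at least $\max\{\lfloor p_0/2\rfloor,\lfloor p_1/2\rfloor\}$ disjoint even leaf-to-leaf $A$-paths. Indeed, for each colour $i$ the minimal subtree $T_i$ of $T$ spanning the colour-$i$ leaves is again subcubic, and its leaves are exactly those colour-$i$ leaves; by Lemma~\ref{l2lpaths} it contains $\lfloor p_i/2\rfloor$ disjoint leaf-to-leaf paths, each joining two leaves of equal colour, hence of even length, and with interior vertices outside $A$ since they are non-leaves of $T$. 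Taking the better of the two colours gives the claim, together with the estimate $\max\{\lfloor p_0/2\rfloor,\lfloor p_1/2\rfloor\}\ge (p_0+p_1-2)/4$, where $p_0+p_1$ is the number of leaves of $T$. In particular, a subcubic tree with at least three leaves has two leaves of the same colour, so every component of $F$ contains an even $A$-path.

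For the main argument I would distinguish cases. If $c\ge k$, then one even $A$-path from each component gives $k$ disjoint even $A$-paths, so assume $c<k$. If $|A\cap V(F)|\ge 4k+2c$, then applying the estimate above to every component (say the $i$-th has $p_i$ leaves) and summing yields at least $\sum_i (p_i-2)/4=(|A\cap V(F)|-2c)/4\ge k$ even $A$-paths, pairwise disjoint since they lie in distinct components. In the remaining case $|A\cap V(F)|<4k+2c$, let $X$ be the union of $A\cap V(F)$ with the set of vertices of degree~$3$ in $F$; these two sets are disjoint, so $|X|=2|A\cap V(F)|-2c<8k+2c<10k$, using $c<k$.

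It then remains to show that $X$ meets every even $A$-path $P$. Were this false, $P$ would have to meet $F$ --- otherwise $F\cup P$ is a forest satisfying all the conditions, the new component $P$ being an even path and hence not an odd path, contradicting the maximality of $F$. Let $v$ be the first vertex of $P$ on $F$; as $v\notin X$ it is neither a leaf nor a vertex of degree~$3$, so it has degree exactly~$2$ in $F$, and the initial segment $P'$ of $P$ from its $A$-endpoint to $v$ is an $A$--$F$-path of length at least~$1$. Now $F\cup P'$ is still a forest, it is subcubic because only the degree of $v$ changes, from~$2$ to~$3$, its leaves are those of $F$ together with the $A$-endpoint of $P'$, and --- this is exactly where the parity condition pays off --- the component through $v$ merely acquires a new leaf, hence now has at least three leaves and certainly is not an odd path, while all other components are unchanged; so $F\cup P'$ obeys all the conditions and properly contains $F$, a contradiction. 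The one genuinely new difficulty, and what I expect to be the crux, is recognising that ``not an odd path'' is the right frame condition: it is weak enough to survive attaching a pendant path at a degree-$2$ vertex (such an attachment only ever creates a leaf, and a tree with three or more leaves is never a path), yet strong enough to force an even $A$-path in each component --- whereas demanding instead that every $A$-path in $F$ be even would break the extension step, since attaching $P'$ at $v$ might create odd $A$-paths through $v$.
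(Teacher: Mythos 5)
Your proof is correct and follows essentially the same route as the paper: the frame is the same maximal subcubic forest, and your condition that no component be an odd path is equivalent to the paper's condition that each component contain an even $A$-path (a component with only two leaves is a path, and one with three or more has two leaves in the same bipartition class). The extraction of $\lfloor p_i/2\rfloor$ even leaf-to-leaf paths via the minimal subtree on one colour class, the summation giving the $4k+2c$ threshold, the size bound $|X|<10k$, and the extension argument showing $X$ is a hitting set all coincide with the paper's proof.
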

\begin{proof}
Let \new{$F\subseteq G$} be a forest maximal under inclusion such that  
\begin{itemize}
\item $F$ \new{is subcubic with no isolated vertices}; 
\item every leaf of $F$ lies in $A$, and every vertex in $A\cap V(F)$ is a leaf of $F$; and 
\item each component of $F$ contains an even $A$-path. 
\end{itemize}

Let $c$ be the number of components of $F$.
First assume that $|A\cap V(F)|\geq 4k+2c$. 
If $F$ has at least $k$ components then, as each component 
contains an even $A$-path, there are $k$ disjoint even $A$-paths. Thus,  $c<k$.

Consider a component $T$ of $F$. Then $T$ is a tree, and its vertices split into
two bipartition classes. The bipartition of $T$ also \new{partitions} $A\cap V(T)$;
let $A_T$ be the one  class of $A\cap V(T)$ that is not smaller than the other one. 
(If both are equal sized, pick one.) 

Each vertex in $A\cap V(T)$ is a leaf of $T$. Delete
the ones in $A\sm A_T$ and iteratively their neighbours until the 
resulting tree $T'$ has all its leaves in $A_T$ (while keeping $A_T\subseteq V(T)$). 

An application of Lemma~\ref{l2lpaths} yields 
\[
\left\lfloor\frac{|A_T|}{2}\right\rfloor\geq \left\lfloor\frac{\lceil|A\cap V(T)|/2\rceil}{2}\right\rfloor
\geq \frac{|A\cap V(T)|}{4}- \frac{1}{2}
\]
disjoint $A_T$-paths in $T$, and thus as many disjoint even $A$-paths. 

Summing over all components we  find at least 
\[
\sum_T\frac{|A\cap V(T)|}{4}- \frac{1}{2} = \frac{|A\cap V(F)|}{4}- \frac{c}{2}\geq  k
\]
disjoint even $A$-paths.

Second assume that $|A\cap V(F)|<4k+2c$. 
Let $X$ be the union of $|A\cap V(F)|$ together 
with the set of vertices of degree~$3$ in $F$. 
Since the number of leaves minus $2$ equals the number of vertices of degree~$3$ in a non-trivial subcubic tree it follows that 
\[
|X|\leq 2|A\cap V(F)|-2c \leq 8k+2c\leq 10k,
\]
where we used that $c<k$.

We claim that $X$ is a hitting set for even $A$-paths. 
Suppose that $X$ fails to meet some even $A$-path $P$. Then, $P$ cannot be 
disjoint from $F$ as otherwise $F\cup P$ would be better choice for $F$.
Thus, $P$ meets $F$\new{.} 
\new{Let} $u$ be the first vertex of $P$ in $F$ (considered from some endvertex $v$ of $P$). 
By definition of $X$, it follows that $u\notin A$ and that $u$ does not have degree~$3$ in $F$.
Therefore, $u$ has degree~$2$ in $F$, and again $F\cup vPu$ contradicts the maximal choice
of $F$.
\end{proof}

By combining the proof techniques of Theorems~\ref{longthm} and~\ref{eventhm}, 
one may readily deduce that also \emph{long even $A$-paths} have the Erd\H os-P\'osa property.
\medskip

What can we say about the  size of the hitting set?
While the bound in the theorem is not optimal, it turns out that the hitting set 
sometimes needs to be larger than $2k-2$, the optimal bound for  $A$-paths without parity constraints. 
To see this we can use the construction for long $A$-paths, where we set $\ell$ to~$4$; see 
Figure~\ref{evenfig}. 
The graph in that construction does not contain any $A$-paths of length~$2$;
that is, every even $A$-path has length at least~$4$ and is thus long. Consequently, 
the graph does not contain $k$ disjoint even $A$-paths, but
at least $3k-3$
vertices are necessary to meet every even $A$-path. 
We conjecture that a hitting set never needs more than $3k-3$ vertices.

Cycles of quite general modularity constraints have the Erd\H os-P\'osa property. 
This is the case, for instance, for cycles of length congruent to~$0$ modulo~$m$, for some fixed
positive integer~$m$ (Thomassen~\cite{Tho88}); and it is also the case for cycles of a non-zero length modulo~$m$,
whenever $m$ is odd (Wollan~\cite{Wol11}). While non-zero $A$-paths are quite well covered by the 
results of Chudnovsky et al.~and of Wollan (see Introduction), 
not much seems to be known about \emph{zero} $A$-paths,
$A$-paths of zero length modulo~$m$, or more generally, with weight $\gamma(P)=0$ for some 
directed or undirected group labeling $\gamma$ of the edges with elements of an abelian \new{group}. Is 
there a counterpart of Theorem~\ref{wolthm} for zero $A$-paths?

\begin{figure}[htb]
\centering
\begin{tikzpicture}
\tikzstyle{Apath}=[line width=2pt,grau];

\def\hstep{0.6}
\def\height{7}

\foreach \i in {1,...,\height}{
  \draw[hedge] (\hstep,\i*\hstep) -- (\height*\hstep,\i*\hstep);
  \draw[hedge] (\i*\hstep,\hstep) -- (\i*\hstep,\height*\hstep);
}

\foreach \i in {2,...,\height}{
  \draw[hedge] (\i*\hstep-\hstep,\height*\hstep) to coordinate[midway] (tl\i) (\i*\hstep,\height*\hstep);
  \path (tl\i) ++(0,0.2) node{{\small $3$}};
}

\foreach \i in {1,...,\height}{
  \foreach \j in {1,...,\height}{
    \node[hvertex] (t\i\j) at (\i*\hstep,\j*\hstep){};
  }
}

\foreach \j in {1,...,\height}{
  \node[hvertex, fill=grau] (la\j) at (0,\j*\hstep){};
  \node[hvertex, fill=grau] (ra\j) at (\height*\hstep+\hstep,\j*\hstep){};
  \draw[hedge] (la\j) to coordinate[midway] (ll\j) (t1\j);
  \draw[hedge] (ra\j) to coordinate[midway] (rl\j) (t\height\j);
  \path (ll\j) ++(0,0.2) node{{\small $1$}};
  \path (rl\j) ++(0,0.2) node{{\small $2$}};
}

\draw[Apath] (la4) -- (t14) -- (t24) -- (t25) -- (t35) -- (t45) -- (t46) -- (t47) -- (t57) -- (t56) -- (t66) -- (t76) -- (ra6);

\node at (-\hstep,0.5*\height*\hstep+0.5*\hstep) {$A$};
\node at (\height*\hstep+2*\hstep,0.5*\height*\hstep+0.5*\hstep) {$A$};

\end{tikzpicture}
\caption{All unlabeled edges have length~$6$; an $A$-path of length divisible by~$6$ in grey.}\label{zeroAfig}
\end{figure}
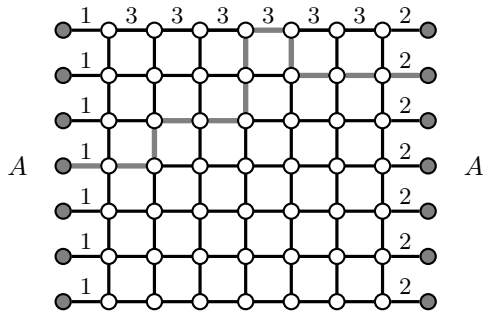

No, it turns out: already $A$-paths of length divisible by~$6$ fail to have the Erd\H os-P\'osa
property. To see this, consider the graph in Figure~\ref{zeroAfig}.
It consists of a subdivision of an $10r\times 10r$-grid, whose left side is matched to $r$ vertices in $A$,
and whose right side is linked by disjoint paths of length~$2$ to $r$ different vertices in $A$. 
Finally, the top edges of the grid are  subdivided to have length~$3$ each, while the other, unmarked edges
of the grid are subdivided to have a length of~$6$ each. 

Every $A$-path with both its endvertices on the left will have a length of $2+3s$ for some integer~$s$, while 
any $A$-path with both endvertices on the right has a length of $4+3t$, for some integer $t$. Neither of these lengths
is divisible by~$6$. Any $A$-path with a length divisible by~$6$ has to cross the grid from left to right,
and in addition, needs to pick up an odd number of the subdivided top edges. 
Clearly, no two such paths can be disjoint. On the other hand, no hitting set can have size smaller than~$r$.

We can generalise the construction to lengths divisible by $m$ for other integers than $m=6$.
Let $m>4$ be a composite number and let $p$ be its smallest prime divisor.
This implies $\frac m p>2$.
Let $b=\frac m p$ and $c=m-\frac m p - 1$.
In the construction of  Figure~\ref{zeroAfig}
we replace every length~$3$ by~$b$, every length~$2$ by~$c$ and every length of an unmarked
edge by~$m$.
Call the new graph $G(m,s)$ if the grid has size $s\times s$.
Then, any $A$-path that has both endvertices on one side 
has length congruent to $\pm 2+k\cdot \frac m p \not \equiv 0 \pmod{m}$
because $\frac m p > 2$.
Any $A$-path that crosses the grid but avoids the top edges
has a length of $m-\frac m p \not \equiv 0 \pmod{m}$.
Thus, every $A$-path of a length divisible by~$m$ crosses from left
to right and picks up at least one of the top edges. 
Again, there cannot be two such paths that are disjoint. 
Any hitting set, on the other hand, must contain a 
substantial part of the grid and thus has unbounded size. 

We can even prove a more general statement on $A$-paths with modularity constraints:
\begin{proposition}\label{dmodm}
	For any composite integer $m>4$ and every $d\in \{0,\ldots, m-1\}$, 
	the $A$-paths of length congruent to $d$ modulo $m$ do not have the Erd\H os-P\'osa property.
\end{proposition}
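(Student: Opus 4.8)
The plan is to re-use the grid construction that precedes the statement, shifting every length so that the role played there by ``length \(\equiv 0\pmod m\)'' passes to ``length \(\equiv d\pmod m\)''. Let \(p\) be the smallest prime divisor of \(m\) and set \(q=m/p\); since \(m>4\) is composite, \(q\ge 3\). Fix \(r\), let \(s\) be large in terms of \(m\) and \(r\), and subdivide the \(s\times s\)-grid so that each top edge has length \(q\) and every other grid edge has length \(m\). Attach the left side of the grid by a matching of paths of length \(\alpha\) to \(r\) vertices of \(A\), and the right side by disjoint paths of length \(\beta\) to \(r\) further vertices of \(A\), where \(\alpha,\beta\ge 1\) are to be chosen; call the result \(G=G(m,d,r)\). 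As before, each \(A\)-path of \(G\) has both ends on the left, both on the right, or one on each side — the matching edges and connector paths have internal vertices of degree \(2\) and the vertices of \(A\) have degree \(1\), so they are used only at the ends — and modulo \(m\) its length is \(2\alpha+jq\), \(2\beta+jq\), or \(\alpha+\beta+jq\), respectively, where \(j\ge 0\) is the number of top edges traversed (every non-top grid edge contributes \(m\equiv 0\)).

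Since \(jq\) runs through all multiples of \(q\) in \(\mathbb{Z}_m\) as \(j\) varies, it is enough to find \(\alpha,\beta\ge 1\) such that (i) \(2\alpha\not\equiv d\) and \(2\beta\not\equiv d\) modulo \(q\), and (ii) \(\alpha+\beta\equiv d\pmod q\) while \(\alpha+\beta\not\equiv d\pmod m\). Then (i) prevents any \(A\)-path with both ends on one side from having length \(\equiv d\pmod m\), and by (ii) a crossing \(A\)-path has length \(\equiv d\pmod m\) exactly when \(j\) lies in a fixed nonempty congruence class modulo \(p\); that class misses \(0\) because \(\alpha+\beta\not\equiv d\pmod m\), so every admissible \(j\) satisfies \(j\ge 1\).

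Such \(\alpha,\beta\) exist, and producing them is really the \emph{crux}. Choose residues modulo \(q\) first: if \(d\not\equiv 0\pmod q\) take \(\alpha\equiv 0\), \(\beta\equiv d\); if \(d\equiv 0\pmod q\) take \(\alpha\equiv 1\), \(\beta\equiv -1\). Either way \(\alpha+\beta\equiv d\pmod q\) and \(2\alpha\not\equiv d\pmod q\) — in the first case because \(d\not\equiv 0\), in the second because \(q\ge 3\) — and then \(2\beta\not\equiv d\pmod q\) too, since \(2\beta\equiv d\) would force \(2\alpha\equiv 2(\alpha+\beta)-2\beta\equiv d\). As \(p\ge 2\), among the \(p\) residues modulo \(m\) that reduce to \(d\) modulo \(q\) at least one differs from \(d\), so after changing \(\beta\) by a multiple of \(q\) we get \(\alpha+\beta\not\equiv d\pmod m\); adding multiples of \(m\) makes \(\alpha,\beta\ge 1\). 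This is the only place where ``\(m>4\), composite'' is used.

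Everything else is exactly as in the case \(d=0\). By the above, every \(A\)-path of \(G\) of length \(\equiv d\pmod m\) crosses the grid from left to right and traverses at least one top edge, hence meets the top row; as for \(d=0\), no two such paths can be disjoint, so \(G\) has no two disjoint \(A\)-paths of length \(\equiv d\pmod m\). It does have one, as long as \(s\) is large enough: go from the left side up to the top row, along the least admissible number of top edges, and down to the right side. And deleting fewer than \(r\) vertices from \(G\) still leaves such a path — at least one of the \(r\) disjoint left connectors and one of the \(r\) disjoint right connectors survive, and routing between them through the huge grid with fewer than \(r\) forbidden vertices is straightforward — so every hitting set for these \(A\)-paths has size at least \(r\). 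Letting \(r\to\infty\) (with \(k=2\)) shows that \(A\)-paths of length congruent to \(d\) modulo \(m\) lack the Erd\H os-P\'osa property. I expect the only real obstacle to be the congruence bookkeeping above: checking that the offset \(d\) can be absorbed into the two connector lengths while keeping every length class other than the intended one away from \(d\) modulo \(m\), which is possible precisely because \(m/p\ge 3\).
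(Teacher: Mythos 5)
Your proof is correct, and it reaches the conclusion by a cleaner piece of bookkeeping than the paper's own argument, while relying on the same underlying construction. The paper starts from $G(m,s)$ with hard-coded connector lengths $1$ and $c=m-m/p-1$ and then splits into three cases: whether $2x\equiv d\pmod m$ is solvable, and if not (so $d$ odd, $m$ even, $p=2$), whether $d\equiv \tfrac m2-2$ or not; in each case the $A$-incident edges are re-subdivided and a list of possible residues for improper paths is checked by hand. You instead treat both connector lengths $\alpha,\beta$ as free parameters and isolate exactly what is needed: conditions (i) $2\alpha,2\beta\not\equiv d\pmod q$ and (ii) $\alpha+\beta\equiv d\pmod q$ but $\not\equiv d\pmod m$, where $q=m/p$. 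Condition (i) kills same-side paths by reduction mod $q$, and (ii) forces crossing paths to traverse a positive number $j\equiv e\not\equiv 0\pmod p$ of top edges. The existence of such $\alpha,\beta$ is then a two-case check on $d\bmod q$ plus a lift to mod $m$ using $p\ge 2$, and it is precisely here that $q\ge 3$ (i.e.\ $m>4$ composite) enters, just as in the paper. What your version buys is a uniform argument that makes visible exactly why the hypothesis ``$m>4$ composite'' is needed: it guarantees $q\ge 3$ so that $2\alpha\equiv 2\not\equiv 0\equiv d\pmod q$ can be arranged, and $p\ge 2$ so that the lift to mod $m$ has room. What the paper's version buys is explicitness: it writes down the offending residue classes in each case, which a careful reader may find easier to double-check. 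The planarity/topological part (no two crossing-and-top-touching paths are disjoint; small vertex sets cannot hit them all) is invoked by you by analogy with the $d=0$ case, exactly as the paper does.
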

\begin{proof}
We start with 
 the graphs $G(m,s)$, the counterexamples for $A$-paths of length divisible by~$m$,
and then, depending on $m$ and $d$, subdivide some of the edges incident with $A$. 
An $A$-path in any subdivision of $G(m,s)$ 
is \emph{proper} if it starts on the left, intersects the top of the grid and ends on the right.

We will modify $G(m,s)$ in such a way that every proper $A$-path that intersects exactly one
subdivided edge of the top of the grid has length congruent to~$d$ modulo~$m$. That means, in 
particular, that every hitting set needs to have size proportional to~$s$.
On the other hand, we will show that no improper $A$-path can have length congruent to~$d$ modulo~$m$.
Since  no two proper $A$-paths are disjoint, this will be enough to finish the proof.

	If the equation $2x\equiv d \pmod{m}$ has a solution $x$,
	we modify the graph $G(m,s)$ as follows:
replace every edge incident with a vertex in $A$ by a path of length~$x+1$.
	Then, every $A$-path in $G(m,s)$ of length $\ell$  corresponds to an $A$-path in the modified graph
	of length $\ell+2x\equiv \ell+d$, and vice versa.
Therefore, with the same argument as above, we deduce that 
$A$-paths of length congruent to $d \pmod{m}$ in the modified graph are proper.
	
Next, assume that $2x\equiv d\pmod{m}$ does not have a solution, which implies 
that $d$ is odd but $m$ even. Thus, $p=2$.
	If $d\not\equiv \frac m 2 -2 \pmod{m}$,
	subdivide the edges in $G(m,s)$ incident with a vertex in $A$ on the left of the grid 
	such that they become paths of length $d+1$.
	Then, proper $A$-paths have length $(d+1)+r\frac m 2 + (m-\frac m 2 -1)$, which is congruent to $d\pmod m$
for some odd $r$.
Every improper $A$-path, in contrast,
has a length congruent to $2(d+1)$, $2(d+1)+\frac m 2$ (both ends on the left),
$2(\frac m 2 -1)$, $2(\frac m 2 -1)+\frac m 2$ (both ends on the right) or $(d+1)+(\frac m 2 -1)$ (no top intersection).
	Using  that $d$ is odd, $m$ is even and $d\not\equiv \frac m 2 -2 \pmod{m}$,
	we see that none of these lengths \new{are} congruent to $d$.

	Suppose now $d\equiv \frac m 2 -2$ (and $d$ is odd and $m$ is even).
	Now, subdivide the edges in $G(m,s)$ incident with a vertex in $A$ on the right side of the grid such that they become paths of length $d+1$.
	Improper $A$-paths have length congruent to $2$, $\frac{m}{2}+2$ (both ends on the left), 
	$2(\frac m 2 -1 +d)$, $2(\frac m 2 -1 +d) +\frac m 2$ (both ends on the right) 
	or $1+\frac m 2 -1+d$. 
	Using $d\equiv \frac m 2 -2$, $m>4$ and the parities of $m$ and $d$,
	we see that none of these lengths are congruent to $d \pmod{m}$.
\end{proof}

Intriguingly, the construction does not work if $m$ is a prime or equal to~$4$.
Does the 
Erd\H os-P\'osa property hold in these cases? We do not know. 

In another simple generalisation,
\new{we endow the graph with an undirected group labeling (see page \pageref{labelledGraphs}).}  
In many groups, in which there are suitable weights $b,c$ to replace the lengths~$b,c$, 
the construction can be adapted so that zero $A$-paths cannot have the Erd\H os-P\'osa 
property.

In other groups this does not seem possible.
The construction fails, for instance, when the underlying group is $\mathbb Z_2^\ell$ for some $\ell\in \N$.
This is for a reason: the proof of Theorem~\ref{eventhm} can be adapted so that it
gives the Erd\H os-P\'osa property for $\mathbb Z_2^\ell$-zero $A$-paths.

\new{
The situation seems to be more complicated for directed group labelings. 
In this setting, we currently can only construct a counterexample 
with the group~$\mathbb Z^2$ but have been unable to do so for any finite group. 
}


\medskip

\new{
Arguably, the first Erd\H os-P\'osa type result is Menger's theorem. Indeed, 
weakening it somewhat, we may reformulate Menger's theorem as: $A$--$B$-paths
have the Erd\H os-P\'osa property. Strikingly, and in contrast to $A$-paths, 
the property is lost once we impose parity constraints. For instance, neither 
even nor odd $A$--$B$-paths have the Erd\H os-P\'osa property. This follows from 
an easy modification of the counterexample in Figure~\ref{zeroAfig}. We only have
to replace the right part of $A$ by $B$, and to adjust the lengths in the grid in such a
way that every $A$--$B$-paths that avoids the top edges has the wrong parity
and such that 
any $A$--$B$-path that traverses one of the top edges has the right parity.
}

\section{Combs}\label{sec:Atrees}

The forest-frame technique is not only suited for different kinds of $A$-paths
but may also be used for certain simple trees. 
One such example are \emph{combs}.

Let us define an \emph{elementary $\ell$-comb}, for an integer $\ell\geq 1$,  
as the graph obtained from a path of length~$\ell$ by adding a pendant edge to each 
internal vertex; see Figure~\ref{combfig}. An \emph{$\ell$-comb} is any subdivision of
an elementary $\ell$-comb. Finally, for a given vertex set $A$, we say that an $\ell$-comb
is an \new{\emph{$A$-$\ell$-comb}} if all its leaves are in $A$ and if every $A$-vertex in the comb 
is a leaf.

\begin{figure}[htb]
\centering
\begin{tikzpicture}

\tikzstyle{Avx}=[hvertex, fill=grau];

\def\step{0.6}

\def\comb#1{
  \draw[hedge] (0,0) -- (#1*\step+\step,0);
  \foreach \i in {1,...,#1}{
    \node[hvertex] (v\i) at (\i*\step,0){};
    \node[Avx] (a\i) at (\i*\step,-\step){};
    \draw[hedge] (v\i) -- (a\i);
  }
  \node[Avx] (l) at (0,0){};
  \node[Avx] (r) at (#1*\step+\step,0){};
}

\comb{1}
\begin{scope}[shift={(5*\step,0)}]
\comb{2}
\end{scope}
\begin{scope}[shift={(11*\step,0)}]
\comb{3}
\end{scope}

\end{tikzpicture}
\caption{An $A$-$2$-comb, an $A$-$3$-comb and an $A$-$4$-comb}\label{combfig}
\end{figure}

\begin{theorem}\label{combthm}
For any positive integer $\ell$, 
there exists an integer $c_\ell$ such that following holds:
For every positive integer $k$, every graph $G$, and 
every set $A\subseteq V(G)$, 
the graph $G$ either contains $k$ disjoint 
$A$-$\ell$-combs or a vertex set $X$ of size $|X|\leq c_\ell k$ that meets every $A$-$\ell$-comb.
\end{theorem}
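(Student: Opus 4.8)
The plan is to follow the template of the proofs of Theorems~\ref{gallaithm} and~\ref{eventhm}. I would take $F\subseteq G$ to be a forest, maximal under inclusion, that is subcubic with no isolated vertices, has every leaf in~$A$ and every vertex of $A\cap V(F)$ a leaf, and in which \emph{every component contains an $A$-$\ell$-comb}. If $G$ contains no $A$-$\ell$-comb we are done with the empty hitting set, and otherwise such an $F$ exists because a single $A$-$\ell$-comb already meets all four conditions. As in the earlier proofs, the crux is a combinatorial lemma in the spirit of Lemma~\ref{l2lpaths}: there is a constant $h_\ell$ so that every subcubic tree with $p$ leaves contains $\lfloor p/h_\ell\rfloor$ disjoint $\ell$-combs each of whose leaves is a leaf of the tree.

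Proving this packing lemma is, I expect, the main obstacle. Suppressing degree-$2$ vertices, a subcubic tree on $n$ vertices has a leaf-to-leaf path with at least $\log_2 n$ edges, so a tree with at least $2^{\ell+1}$ leaves has a leaf-to-leaf path $P$ of length at least $\ell+1$; one reads an $\ell$-comb off the end of $P$ by using the first $\ell-1$ branch vertices of $P$ as teeth vertices (each with a pendant path, inside the corresponding off-$P$ subtree, down to a leaf) and diverting the spine at the next branch vertex down to a leaf. The crucial point is to take $P$ \emph{diametral}: then each off-$P$ subtree hanging at one of the first $\ell$ vertices of $P$ has depth $O(\ell)$, so the comb uses only $O(\ell)$ leaves of the tree and deleting its vertex set leaves only $O(\ell^2)$ components. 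Now iterate: while some component still has at least $2^{\ell+1}$ leaves, extract such a comb from it and delete it. After $t$ extractions the remaining forest has at most $1+O(\ell^2)\,t$ components and at least $p-O(\ell)\,t$ leaves, and on termination every component has fewer than $2^{\ell+1}$ leaves; comparing these gives $p-O(\ell)\,t<2^{\ell+1}(1+O(\ell^2)\,t)$ and hence $t=\Omega_\ell(p)$. Controlling the fragmentation — the $O(\ell^2)$ bound on newly created components — is the delicate step; the rest is bookkeeping.

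With the lemma at hand, write $c$ for the number of components of $F$ and $p:=|A\cap V(F)|$, which by the frame conditions equals the number of leaves of~$F$. If $c\ge k$, one $A$-$\ell$-comb per component gives $k$ disjoint ones. If $c<k$ and $p\ge 2h_\ell k$, applying the packing lemma componentwise yields more than $\sum_i\lfloor p_i/h_\ell\rfloor\ge p/h_\ell-c>k$ disjoint combs in $F$; each is a genuine $A$-$\ell$-comb, since its leaves are leaves of $F$ (hence in $A$) while its remaining vertices have degree at least~$2$ in $F$, so are not leaves of $F$ and therefore lie outside $A$. If $c<k$ and $p<2h_\ell k$, set $X:=(A\cap V(F))\cup\{\,v:\deg_F(v)=3\,\}$; since a subcubic forest with $c$ components has exactly (number of leaves)$-2c$ vertices of degree~$3$, we get $|X|=2p-2c<4h_\ell k$, so $c_\ell:=4h_\ell$ works.

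Finally, $X$ meets every $A$-$\ell$-comb. If $D$ is an $A$-$\ell$-comb avoiding $X$, then $D$ cannot be vertex-disjoint from $F$ (else $F\cup D$ would be a strictly larger forest still satisfying all four conditions), while every leaf of $D$ lies in $A$ and hence, avoiding $A\cap V(F)\subseteq X$, lies outside~$F$. Pick a leaf $a$ of $D$ and a vertex $w\in V(D)\cap V(F)$ at minimum distance from $a$ in $D$, and let $R$ be the $a$--$w$ path in $D$. Its interior avoids $V(F)$ (by minimality) and avoids $A$ (interior vertices of the comb $D$ are not leaves of $D$); and $w\notin A$ with $\deg_F(w)\neq3$ since $w\notin X$, while $w$ is not a leaf of $F$ and $F$ has no isolated vertices, so $\deg_F(w)=2$. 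Then $F\cup R$ is a forest, subcubic (only $w$ changes degree, from $2$ to $3$), has leaf set equal to the leaves of $F$ together with $a\in A$, has every $A$-vertex a leaf, and still has an $A$-$\ell$-comb in every component; since $a\notin V(F)$ this contradicts the maximality of $F$, and the proof is complete.
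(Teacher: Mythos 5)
Your overall structure reproduces the paper's proof sketch faithfully: you choose exactly the same maximal frame $F$ (subcubic forest with no isolated vertices, leaves exactly $A\cap V(F)$, every component containing an $A$-$\ell$-comb), you extract many $A$-$\ell$-combs when $F$ has many $A$-vertices relative to its number of components, and otherwise you take $X=(A\cap V(F))\cup\{v:\deg_F v=3\}$ with $|X|=2|A\cap V(F)|-2c$ and show it is a hitting set via the usual $A$--$F$-path augmentation (your last paragraph is in fact more detailed than the one-line argument the paper gives). The arithmetic in the two cases is correct. So far this is the paper's argument.

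Where the paper is silent is precisely the place you correctly identify as the crux: a packing lemma of the form ``a subcubic tree with $p$ leaves contains $\Omega_\ell(p)$ disjoint $\ell$-combs whose leaves are leaves of the tree.'' The paper merely asserts this as ``similar to Lemma~\ref{l2lpaths}.'' Your attempted proof of it has a real gap, and it sits exactly in the ``delicate fragmentation step'' you flag. After you delete the vertex set of the first comb, a component $T_i$ of the remaining forest has two kinds of leaves: original leaves (degree~$1$ already in $T$) and new cut-point leaves (vertices of degree $\ge 2$ in $T$ that lost a neighbour to the deleted comb). Your extraction step in $T_i$ picks a diametral leaf-to-leaf path of $T_i$ and drops pendant paths ``down to a leaf'' of $T_i$; nothing prevents the endpoints of the diametral path, or the tips of the pendant paths, from being cut-point leaves. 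The comb you build then has leaves that are not leaves of $T$, hence (back in $F$) not in $A$, so it is not an $A$-$\ell$-comb, and the invariant you need breaks. Your later inequality ``at least $p-O(\ell)\,t$ leaves'' implicitly counts original leaves, but ``each component has fewer than $2^{\ell+1}$ leaves'' and ``extract such a comb'' implicitly use the component's leaves; these are not the same set, and the bookkeeping does not close.

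The fix is not hard, and it is worth being explicit so that the lemma actually holds. One option is to prune after every deletion: replace each component $T_i$ by the minimal subtree spanning its original leaves (equivalently, repeatedly delete any leaf of $T_i$ that is not a leaf of $T$, then suppress degree-$2$ vertices). This makes every leaf original, and your diameter/depth argument then goes through verbatim; the extra pruned vertices carry no original leaves, so they do not disturb the leaf count. A cleaner route avoids the accumulation of components altogether: instead of deleting the comb, split $T$ \emph{at} $v_\ell$ into the part $T_1$ containing $v_0,\dots,v_\ell$ together with all their off-$P$ subtrees, and the remainder $T'$. Because $P$ is diametral, the off-$P$ subtree at $v_i$ has depth at most $i$, so $T_1$ has at most $1+\sum_{i=1}^{\ell}2^i\le 2^{\ell+1}$ leaves, all of them leaves of $T$; and $T'$ is again a subcubic tree whose leaves are leaves of $T$ (the vertex $v_{\ell+1}$ drops to degree~$2$, not~$1$). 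Taking $h_\ell=2^{\ell+1}$ and inducting on $T'$ yields $\lfloor p/h_\ell\rfloor$ combs directly, with no fragmentation to control. With either fix, your proof is complete and matches the paper's approach; without one, the packing lemma as you argue it does not hold.
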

\begin{proof}[Proof sketch]
As a frame we choose a $\subseteq$-maximal forest $F\subseteq G$ such that 
\begin{itemize}
\item $F$ \new{is subcubic with no isolated vertices}; 
\item every leaf of $F$ lies in $A$, and every vertex in $A\cap V(F)$ is a leaf of $F$; and 
\item each component of $F$ contains an $A$-$\ell$-comb. 
\end{itemize}

Similar as with the leaf-to-leaf paths in a tree, 
we may find $\lfloor\tfrac{4}{c_\ell}|A\cap V(T)|\rfloor$ disjoint 
$A$-combs in every component $T$ of $F$, but at least one (for some positive integer $c_\ell$). 
Thus with basically the same calculations
as in the proofs of Theorems~\ref{longthm} and~\ref{eventhm}, we see that there are $k$ disjoint 
$A$-$\ell$-combs in $F$
unless the set $X$, consisting of the vertices from $A$ in $F$ and of the vertices of degree~$3$ in $F$, 
has size smaller than $c_\ell k$. 
As before we may argue that $X$ is a hitting set for $A$-$\ell$-combs.
Indeed,
\new{
suppose there is an $A$-$\ell$-comb $C$ in $G-X$.
Then,  $C$ contains an $A$--$F$-path that can be added to $F$, which results in a larger frame.
}
\end{proof}

\comment{
COMMENT START
Here's the calculation: Let $\tau$ be such that every component $T$ of $F$ yields $\lfloor \tfrac{1}{\tau}|A\cap V(T)|\rfloor$
many $A$-$\ell$-combs. Then if $|A\cap V(F)|\geq \tau k+c(\tau-1)$, where $c$ is the number of components of $F$
then we obtain $k$ disjoint combs. Also $c<k$.
Thus, assume $|A\cap V(F)|< \tau k+c(\tau-1)$. Then 
\begin{align*}
|X|\leq 2|A\cap V(F)|-2c < 2\tau k+2c\tau-4c \leq 4\tau k.
\end{align*}
Now put $c_\ell=4\tau$.
COMMENT END
}

\section{Edge-versions and Mader's theorem}\label{edgesec}

Theorem~\ref{epthm}, the theorem of Erd\H os and P\'osa, as well as Gallai's theorem (Theorem~\ref{gallaithm}),
both have an edge-version. The one of Gallai's theorem is due to Mader.

\begin{theorem}[Mader~\cite{Mad78}]
For every positive integer $k$, every graph $G$ and 
every set $A\subseteq V(G)$, the graph $G$ either contains $k$ edge-disjoint 
$A$-paths or an edge set $F$ of size $|F|\leq 2k-2$ that meets every  $A$-path.
\end{theorem}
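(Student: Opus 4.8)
The plan is to adapt the forest-frame argument exactly as in the proof of Gallai's theorem (Theorem~\ref{gallaithm}), but with \emph{edge-disjoint} trees replaced by a single frame forest that is no longer required to be subcubic. Specifically, I would let $F\subseteq G$ be a forest, maximal under inclusion, subject to the conditions that $F$ has no isolated vertices, every leaf of $F$ lies in $A$, and every vertex of $A\cap V(F)$ is a leaf of $F$. The new twist is that, since we are now after edge-disjoint (rather than vertex-disjoint) $A$-paths, we do not bound the degrees: a vertex of high degree in $F$ is fine because the leaf-to-leaf paths through it can still be made edge-disjoint.

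The first step is an edge-analogue of Lemma~\ref{l2lpaths}: \emph{every tree $T$ with $p$ leaves, all in $A$, contains $\lfloor p/2\rfloor$ edge-disjoint leaf-to-leaf paths}. I would prove this by induction, pairing up two leaves $\ell_1,\ell_2$ whose meeting point (the last common vertex on their paths to a fixed root) is as far from the root as possible; the path $\ell_1\!-\!\ldots\!-\!\ell_2$ is then edge-disjoint from everything that survives after deleting its edges, and deleting those edges and cleaning up pendant paths leaves a tree with $p-2$ leaves in $A$. Summing over the components of $F$, if $|A\cap V(F)|\ge 2k+c$ (where $c$ is the number of components), we get $k$ edge-disjoint $A$-paths.

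The second step is the hitting set. Assume $|A\cap V(F)|<2k+c$. Now take $F$ itself — or rather the edge set $E(F)$ — as (the core of) the hitting set. The point is that any $A$-path $P$ edge-disjoint from $F$ must, by maximality of $F$, already be vertex-disjoint from $F$ except possibly at its endpoints in $A$; but its endpoints lie in $A$, hence are already leaves of $F$ (so $P$ cannot be added) — more carefully, one argues as in Gallai's proof that the initial $A$--$F$ segment $P'$ has length $\ge 1$ and ends at a non-leaf, non-$A$ vertex, so $F\cup P'$ (edge-disjoint union) contradicts maximality. Thus $E(F)$ meets every $A$-path. The size bound is the main quantitative point: a forest with $c$ components, $\lambda$ leaves and no degree-$2$ vertices other than internal subdivision points has at most $\lambda-2c$ edges after contracting degree-$2$ vertices, but here degree-$2$ vertices are allowed, so one must instead observe that after suppressing degree-$2$ vertices the frame has at most $|A\cap V(F)|-c$ edges; and since suppressing degree-$2$ vertices does not change which $A$-paths are hit (a degree-$2$ vertex of $F$ on an $A$-path forces that path to use at least one $F$-edge, or one reroutes), one may assume $F$ has no internal degree-$2$ vertices, giving $|E(F)|\le |A\cap V(F)|-c < 2k$, in fact $\le 2k-2$ after accounting for components being single edges.

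The main obstacle I anticipate is precisely this last bookkeeping: getting the clean bound $2k-2$ rather than something like $4k$ requires being careful about components of $F$ that are single $A$--$A$ edges (these are themselves $A$-paths and should be counted toward the $k$ disjoint paths, so they can be assumed absent from the ``bad'' case), and about whether degree-$2$ vertices can really be suppressed without affecting the hitting property — unlike in the vertex version, an edge of $F$ genuinely must be ``charged'' to something, so one wants every edge of the suppressed frame to be chargeable to a leaf, i.e.\ to an element of $A\cap V(F)$, with the charging injective-enough to yield $|E(F)|\le |A\cap V(F)|-c$. Once that charging is set up the two inequalities $|A\cap V(F)|\ge 2k+c$ (giving $k$ paths) and $|A\cap V(F)|<2k+c$ (giving $|E(F)|<2k$) dovetail, and the worse-bound caveat in the paper suggests the authors are content with, say, $2k-2$ up to small additive slack rather than the exact optimum.
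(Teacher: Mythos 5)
Your step~1 (the edge-analogue of Lemma~\ref{l2lpaths}) is fine --- it is a special case of Lemma~\ref{treelem} in the paper, attributed to Thomassen. But the hitting-set step has a genuine gap: $E(F)$ is \emph{not} a hitting set for $A$-paths, and the maximality of $F$ does not give you the contradiction you need. Concretely, suppose $a,a'\in A$ are both leaves of (the same component of) $F$, and there is an $A$-path $P$ from $a$ to $a'$ whose interior avoids $V(F)$ (or at any rate whose edges avoid $E(F)$). Then $P$ is not hit by $E(F)$, and yet $F\cup P$ is \emph{not} a valid extension of $F$, because after adding $P$ the $A$-vertices $a,a'$ have degree~$2$ and are no longer leaves, violating the frame condition. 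So no contradiction with maximality arises. A minimal example: let $G$ be the ``theta'' graph consisting of two vertices $a,a'\in A$ joined by three internally disjoint length-$2$ paths. A maximal frame $F$ is one of those length-$2$ paths; $|A\cap V(F)|=2$ and $c=1$, so for every $k\geq 1$ you land in the hitting-set branch, yet $E(F)$ (two edges) misses the other two $A$-paths. Here there are in fact $3$ edge-disjoint $A$-paths, but your argument produces neither those paths nor a valid hitting set. The situation you described in Gallai's vertex proof (``$P'$ has length $\geq 1$ and ends at a non-leaf, non-$A$ vertex'') relied on the hitting set containing all $A$-vertices and all degree-$3$ vertices of $F$; with $E(F)$ as the hitting set, $P$ is free to enter $F$ exactly at such vertices.

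There is also a quantitative slip: a tree with $p$ leaves, all in $A$, and no degree-$2$ vertices can have up to $2p-3$ edges (attained by a subcubic tree), not $p-1$; even a star on $p$ leaves has $p$ edges. So even ignoring the hitting-set issue, your count gives roughly $4k$, not $2k-2$. For what it is worth, the paper itself does \emph{not} reprove Mader's theorem with the sharp constant $2k-2$ --- it only cites it, remarks that Mader's proof is not short, and instead proves Proposition~\ref{maderprop} with the much weaker bound $2k\log_2 k$ by a different route: take a spanning tree to either extract $k$ edge-disjoint $A$-paths (via Lemma~\ref{treelem} when $|A|\geq 2k$), or, when $|A|<2k$, iteratively bipartition $A$ and apply the edge version of Menger's theorem $\lceil\log_2|A|\rceil$ times, accumulating at most $k-1$ cut edges per round. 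The iterated-Menger step is precisely what replaces the failing ``$E(F)$ is a hitting set'' claim; the authors explicitly note that here the frame (the spanning tree) only gets you halfway.
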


Mader's proof is not short. Using a frame-like argument
we give here a short proof but with a much worse bound. 
Again a tree will serve as frame:

\begin{lemma}[\new{Thomassen~\cite{Tho16}}]\label{treelem}
Let $T$ be a tree and let $A\subseteq V(T)$. Then $T$ 
contains  $\lfloor\tfrac{1}{2}|A|\rfloor$
edge-disjoint $A$-paths.
\end{lemma}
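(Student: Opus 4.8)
The plan is to prove Lemma~\ref{treelem} by induction on $|V(T)|$, peeling off one $A$-path at a time while keeping track of the parity of $|A|$. First I would reduce to the case where every leaf of $T$ lies in $A$: if some leaf $v\notin A$, delete $v$; this neither changes $A$ nor destroys any $A$-path through the tree (an $A$-path never has an endvertex outside $A$, and an internal vertex of degree~$1$ is impossible), so the claim for $T-v$ gives the claim for $T$. After this reduction, $T$ has all leaves in $A$; if additionally $|A|\le 1$ there is nothing to prove, so assume $|A|\ge 2$.

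\medskip

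The key step is to find two leaves $\ell_1,\ell_2\in A$ such that the $\ell_1$--$\ell_2$ path $P$ in $T$ can be removed in a way that decreases $|A|$ by exactly~$2$ and leaves a forest each of whose components still has all its leaves in (the new) $A$. Here is the mechanism I would use: pick a longest path $Q=\ell_1\dots\ell_2$ in $T$; then both endvertices are leaves, hence in $A$. Let $P$ be this path (or, if one prefers a cleaner bookkeeping, let $P$ be a shortest $A$-path, i.e.\ one with no internal $A$-vertex — this exists since $|A|\ge 2$ and $T$ is connected). Delete from $T$ the \emph{edges} of $P$ together with the internal vertices of $P$ that become isolated, and more importantly iteratively delete any newly created leaf that is not in $A\sm\{\ell_1,\ell_2\}$. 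The resulting graph $T'$ is a forest with $|A|-2$ vertices of $A$ and with every leaf of every component in $A$; moreover every component of $T'$ is a tree with its vertex set of $A$ being a subset of $A\sm\{\ell_1,\ell_2\}$. Applying the induction hypothesis componentwise yields $\lfloor\tfrac12(|A|-2)\rfloor=\lfloor\tfrac12|A|\rfloor-1$ edge-disjoint $A$-paths, which together with $P$ give the required $\lfloor\tfrac12|A|\rfloor$ paths, all pairwise edge-disjoint since $P$ uses only edges not present in $T'$.

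\medskip

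The main obstacle is making precise the deletion step so that it genuinely removes exactly two $A$-vertices and preserves the ``all leaves in $A$'' invariant: when we remove the edges of $P$, vertices of $P$ that had degree~$2$ in $T$ become isolated and simply vanish, but a branch vertex $s$ on $P$ of degree~$3$ loses one incident edge and survives with degree~$2$, which is fine; the subtlety is only that after cleaning up non-$A$ leaves we must still have $A\sm\{\ell_1,\ell_2\}$ contained in the vertex set, which holds because we never delete a vertex of $A$. If instead one chooses $P$ to be a \emph{shortest} $A$-path, then $P$ has no internal vertex in $A$ and no internal branch vertex is forced, so the analysis is even cleaner: deleting $E(P)$ separates $T$ into components, the internal vertices of $P$ (none in $A$) are removed, and we iteratively strip non-$A$ leaves. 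Either way the argument is short, and the parity floor handles the case $|A|$ odd automatically since $\lfloor\tfrac12|A|\rfloor-1=\lfloor\tfrac12(|A|-2)\rfloor$.
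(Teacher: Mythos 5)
Your reduction to the case that every leaf lies in $A$ is fine, and the general shape of the induction is the right idea, but the step in the middle has a genuine gap: after deleting the edges of $P$, the graph $T'$ is a \emph{forest}, and you then sum the inductive bound over its components. This sum is $\sum_i\lfloor\tfrac12|A\cap V(T_i)|\rfloor$, which can be \emph{strictly smaller} than $\lfloor\tfrac12(|A|-2)\rfloor$ once the remaining $A$-vertices split into several components of odd intersection with $A$. The floor does not distribute over sums, and neither of the two path choices you offer (a diameter path, or a shortest $A$-path) prevents this split.

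Concretely, take the tree $T$ with spine $a_1-v_1-v_2-v_3-a_2$, a pendant path $v_1-w_1-w_2-b_1$ of length $3$, and a pendant path $v_3-w_1'-w_2'-b_2$ of length $3$; put $A=\{a_1,a_2,b_1,b_2\}$. All four leaves lie in $A$ and $|A|=4$, so we must find $2$ edge-disjoint $A$-paths, and indeed $a_1 v_1 w_1 w_2 b_1$ and $a_2 v_3 w_1' w_2' b_2$ do the job. However, the unique diameter path is $b_1\cdots b_2$, and one of the shortest $A$-paths is $a_1 v_1 v_2 v_3 a_2$; removing the edges of either and cleaning up leaves the two remaining $A$-vertices in \emph{two separate components}, contributing $\lfloor\tfrac12\rfloor+\lfloor\tfrac12\rfloor=0$ further paths, for a total of only $1$. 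So your accounting $\lfloor\tfrac12(|A|-2)\rfloor=\lfloor\tfrac12|A|\rfloor-1$ fails. (There does exist a good shortest $A$-path in this example, namely $a_1\cdots b_1$, but your argument gives no reason to prefer it, and in general you cannot pick freely.)

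To repair the induction you need a choice of $A$-path whose removal keeps the $A$-vertices in a single component, or more robustly, a choice that keeps the remainder a single tree rather than a forest. The standard way to do this (and the route taken in the source the paper cites) is to root $T$ at a vertex $r\in A$ and pick an $A$-path $P$ that is \emph{farthest from $r$}; one then shows every vertex of $P$ other than the one nearest to $r$ has degree at most $2$, so $T-\bigl(V(P)\setminus\{t\}\bigr)$ is still a tree, where $t$ is the vertex of $P$ closest to $r$. After trimming leaves not in $A$, the number of $A$-vertices drops by exactly two and the object you recurse on is a single tree, so the floor causes no loss. Alternatively, one can strengthen the inductive statement to forests, but then the bound must be $\sum_i\lfloor\tfrac12|A\cap V(T_i)|\rfloor$ rather than $\lfloor\tfrac12|A|\rfloor$, which is exactly what makes the naive per-component recursion insufficient.
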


The lemma is the finite version of \new{a result of  Thomassen~\cite[Theorem~8]{Tho16}.}
Thomassen writes that the finite version is an `easy exercise'.

\comment{
\begin{proof} 
We do induction on $|A|$. We pick a root $r\in A$. 
Pick an $A$-path $P$ such that the distance between $r$ and $P$ is maximal. 
Let $t$ be the vertex in $P$ that is closest to $r$. 

Suppose that $t'\neq t$ is a vertex in $P$ that has degree  at least~$3$ in $T$. Then there is a leaf-to-leaf
path $P'$ that passes through $t'$ that is disjoint from $t$. Therefore $P'$ contains an $A$-path.
As $T$ is a tree, every $P'$--$r$-path contains $t$.
Consequently, $P'$ has larger distance to $r$ than $P$, 
which is impossible. Thus, if $P$ contains a vertex of degree at least~$3$ in $T$
then it is $t$. 

Let $T'$ be obtained from $T$ by deleting $V(P)\sm\{t\}$. If $T'$ fails to be a tree, then some leaf $\ell\notin V(P)$ 
of $T$ must be separated from $r$ by $V(P)\sm\{t\}$. The $\ell$--$P$-path $Q$ in $T$ ends in a vertex, $v$ say, 
that is either an endvertex of $P$ or has at least degree~$3$. Since $t$ is the only vertex in $P$ 
of degree at least~$3$, it follows that $v$ is an endvertex of $P$ and thus lies in $A$. But now $Q$ is an $A$-path
of larger distance to $r$. Thus $T'$ is a tree. 

Let $R$ be the $t$--$r$-path in $T'$ and let $s$ be the first vertex on $R$ that has degree at least~$3$
in $T'$ or that lies in $A$. It is possible that $s=t$. Removing the initial segment $tRs$ except for $s$
from $T'$ we obtain a tree $T''$ in which every leaf lies in $A$. Moreover, 
we see that the number of vertices from $A$ in $T'$ has decreased by only~$2$ compared to $T$. 
We now proceed by induction: the inductively obtained paths together with $P$ then are as desired.
\end{proof}
}

We now prove our weaker version of Mader's theorem.

\begin{proposition}\label{maderprop}
For every positive integer $k$, every graph $G$ and 
every set $A\subseteq V(G)$, the graph $G$ either contains $k$ edge-disjoint 
$A$-paths or an edge set $F$ of size $|F|\leq  2k\log_2k$ that meets every  $A$-path.
\end{proposition}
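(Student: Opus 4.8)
The plan is to mimic the frame arguments of the earlier sections, but in an edge-disjoint setting, using Lemma~\ref{treelem} in place of Lemma~\ref{l2lpaths}. I would choose as frame a forest $F\subseteq G$ that is $\subseteq$-maximal subject to: $F$ has no isolated vertices, every leaf of $F$ lies in $A$, and every vertex of $A\cap V(F)$ is a leaf of $F$. (Note we no longer need to insist that $F$ be subcubic, since for edge-disjoint paths Lemma~\ref{treelem} works for arbitrary trees; dropping the degree restriction actually makes the maximality argument cleaner.) Let $c$ be the number of components of $F$.

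First I would handle the ``many $A$-vertices'' case. If $|A\cap V(F)|$ is large — say at least $2k$ — then I want to conclude there are $k$ edge-disjoint $A$-paths. If $F$ has at least $k$ components, each component meets $A$ in at least two vertices (every leaf is in $A$ and a tree has $\ge 2$ leaves), so picking one $A$-path per component gives $k$ edge-disjoint $A$-paths. Otherwise $c<k$, and since edge-disjoint $A$-paths from different components of a forest remain edge-disjoint, summing the bound of Lemma~\ref{treelem} over components yields at least $\sum_T\lfloor\tfrac12|A\cap V(T)|\rfloor \ge \tfrac12|A\cap V(F)| - \tfrac{c}{2} \ge \tfrac12(2k) - \tfrac{k}{2}$ paths, which is not quite $k$; so I actually need the threshold $|A\cap V(F)|\ge 3k$ (or $2k+c$, mirroring the earlier proofs). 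With threshold $2k+c$ I get $\ge\tfrac12(2k+c)-\tfrac c2 = k$ edge-disjoint $A$-paths.

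Second, the ``few $A$-vertices'' case: $|A\cap V(F)| < 2k+c$. Here I would take $X$ to be the edge set of $F$ (all edges, not a vertex set — and note the statement of the proposition asks for an edge hitting set). I need two things: that $X$ meets every $A$-path, and that $|X|$ is bounded by $2k\log_2 k$. For the hitting property: if some $A$-path $P$ is edge-disjoint from $F$, then consider the initial $A$--$F$-segment $P'$ of $P$; by the usual argument (every $A$-vertex of $F$ is a leaf, so $P'$ has length $\ge1$) the forest $F\cup P'$ still has no isolated vertices and... wait — here is the subtlety: adding $P'$ might create a vertex of $A$ in the interior, or $P$ might not even meet $F$. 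If $P$ is edge-disjoint from $F$ but meets $F$ only in vertices, then $F\cup P'$ with $P'$ a single $A$--$F$ path of length $\ge 1$ ending at a non-$A$ vertex $u\in V(F)$ is again a valid frame (it stays a forest since $P'$ meets $F$ in exactly one vertex $u$), contradicting maximality; if $P$ avoids $V(F)$ entirely then $F\cup P$ itself, or $F$ plus a leaf-to-leaf subpath, contradicts maximality. So $X=E(F)$ does hit every $A$-path. The main obstacle — and the reason the bound degrades to $2k\log_2 k$ — is bounding $|E(F)|$: a forest can have arbitrarily many edges even with few leaves (long induced paths), so I cannot directly use $|E(F)|\le$ (something)$\cdot|A\cap V(F)|$. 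Instead I would need a preprocessing step: replace $G$ by the graph obtained from a BFS/DFS-type reduction, or — more in the spirit of this paper — first contract degree-$2$ vertices of $F$ (those not in $A$) to get a topological minor $F'$ with $|E(F')|\le 2|A\cap V(F)|$, and argue that an edge hitting set in the contracted picture lifts to... no, that still gives an unbounded edge set upstairs.

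The honest route to the $2k\log_2 k$ bound must be different: I would guess the actual proof does \emph{not} use a maximal-forest frame at all for the size bound, but instead runs Gallai's theorem (Theorem~\ref{gallaithm}) recursively. Concretely: apply Gallai with parameter $k$ to get either $k$ disjoint (hence edge-disjoint) $A$-paths — done — or a vertex set $Y$, $|Y|\le 2k-2$, meeting every $A$-path. Each $y\in Y$ has some degree; split into low-degree and high-degree vertices. For a low-degree $y$, its incident edges form a small edge set. For the high-degree vertices, recurse: any $A$-path through a high-degree $y$ must use one of its edges, and one can route many edge-disjoint $A$-paths through a single high-degree vertex, so after $\log_2 k$ levels of halving $k$ the recursion bottoms out, contributing $\sum 2k_i \le 2k$ edges per level over $\log_2 k$ levels, i.e.\ $2k\log_2 k$ edges total. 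So the plan is: (1) apply Gallai to get a vertex hitting set of size $\le 2k-2$; (2) for each hitting vertex, either cheaply hit all $A$-paths through it by taking $< k$ of its incident edges, or find $\ge k$ edge-disjoint $A$-paths through it alone; (3) set up the recursion carefully with a parameter that halves, yielding the $\log_2 k$ factor. The hard part will be step (2)/(3): making the recursion and the edge-count bookkeeping precise, and arguing that ``many incident edges at a single vertex that all extend to $A$-paths'' really gives many edge-disjoint $A$-paths — this needs a Mader-flavored local argument or a direct greedy routing, and getting the constants to land exactly at $2k\log_2 k$ rather than, say, $O(k\log k)$ with a worse constant is where the care goes.
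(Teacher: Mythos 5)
Your first attempt (taking $X=E(F)$ for a maximal forest frame $F$) you correctly diagnose as hopeless: the number of edges in a forest is not controlled by the number of its leaves, so $|E(F)|$ cannot be bounded in terms of $|A\cap V(F)|$. Good instinct to abandon it.

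The pivot you guess at, however, has a real gap. You propose to apply Gallai's theorem to get a vertex hitting set $Y$ with $|Y|\le 2k-2$, then recurse on high-degree vertices of $Y$, relying on the claim that \emph{``one can route many edge-disjoint $A$-paths through a single high-degree vertex.''} That claim is false in general: a vertex $y$ may have enormous degree while the $A$-paths through $y$ are all forced through a bottleneck elsewhere, or the incident edges may not extend to $A$-paths at all. High degree at a point gives you nothing about edge-connectivity between two relevant sets, which is what you would actually need. As written, the recursion has no valid base case to halve on, and the bookkeeping (``$\log_2 k$ levels of halving $k$'') has no supporting lemma.

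The paper's proof avoids these issues by never invoking Gallai and never reasoning about degrees. Two ingredients: (1) If $|A|\ge 2k$, a single spanning tree plus Lemma~\ref{treelem} already yields $k$ edge-disjoint $A$-paths; no frame needed. (2) If $|A|<2k$, iteratively bisect a partition of $A$: start with $\mathcal A_0=\{A\}$, and at round $i$ split every block of $\mathcal A_{i-1}$ into two near-halves, let $B_1^*$, $B_2^*$ be the unions of the left and right halves, and apply the \emph{edge version of Menger's theorem} between $B_1^*$ and $B_2^*$ in $G-X_{i-1}$. Either you get $k$ edge-disjoint $B_1^*$--$B_2^*$-paths (each an $A$-path), or an edge cut of size at most $k-1$, which you add to the hitting set $X_i$. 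After $\lceil\log_2|A|\rceil$ rounds every block is a singleton, so $X_s$ hits every $A$-path and $|X_s|\le k\lceil\log_2(2k)\rceil$. The dichotomy ``many edge-disjoint paths or small cut'' that your recursion needed but could not justify is exactly what Menger provides once you have two disjoint vertex sets to separate --- and the partition refinement is precisely the device that manufactures those two sets at every scale. That is the idea your proposal is missing.
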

\begin{proof}
We may assume that $G$ is connected. Pick a spanning tree $T$ of $G$. Now, if $|A|\geq 2k$ 
then, by Lemma~\ref{treelem}, the graph $G$ contains $k$ edge-disjoint $A$-paths. 

Thus, we may assume that $|A|<2k$.
Put $\mathcal A_0=\{A\}$. Unless we find $k$ edge-disjoint $A$-paths we construct
for each $i=1,\ldots, \lceil \log_2|A|\rceil$ a partition $\mathcal A_i$ of $A$ and 
an edge set $X_i$ of size at most $ik$ such that $X_i$ meets every $B$--$B'$-path
for distinct $B,B'\in\mathcal A_i$. Assume the construction to be achieved for $i-1$. 
Split each $B\in\mathcal A_{i-1}$ into two sets $B_1$,$B_2$ that differ in cardinality by at most~$1$,
and let $\mathcal A_i$ be the set of all such $B_1,B_2$, and let $B^*_1$ be the union of all such $B_1$,
and let $B_2^*$ be the union of all such $B_2$.

\new{Apply  Menger's theorem between  $B^*_1$ and $B^*_2$.} If there are  at most $k-1$ edges
that separate $B^*_1$ from $B^*_2$ in $G-X_{i-1}$,
then we take as  $X_i$ the union of these edges with $X_{i-1}$. 
Since $X_{i-1}$ separates, by induction, $B$ from $B'$ for any two distinct $B,B'\in\mathcal A_{i-1}$,
it follows that $X_i$ separates any two sets in $\mathcal A_i$. 
If there is no edge set of size at most $k-1$ that separates $B^*_1$ from $B_2^*$, then there are
$k$ edge-disjoint $B_1^*$--$B_2^*$-paths, each of which is an $A$-path.

Note that for $s=\lceil \log_2|A|\rceil$ each set in $\mathcal A_s$ is a singleton. 
Thus, $X_s$ meets every $A$-path. Its size is $|X_s|\leq \lceil \log_2|A|\rceil \cdot k<\lceil\log_2(2k)\rceil k$.
\end{proof}

Arguably, the key argument differs markedly from the other arguments in this article, and
should perhaps not be a called a frame argument. Indeed, as a characteristic of a frame
we stated at the beginning that the frame determines the outcome: either it is large, with 
respect to some suitable measure, and then yields $k$ disjoint target objects, or it is small 
and delivers a hitting set. In the proof of Proposition~\ref{maderprop}, the frame,  the spanning tree,
only gets us halfway: if it is large, that is, if it contains many vertices from $A$, then we 
find $k$ edge-disjoint $A$-paths, but if it is small (not many $A$-vertices), then
\new{ both outcomes are still possible.} 

Why is that so? Did we not use the `right' frame? Perhaps there simply is no `right' frame.
Indeed, edge-versions in this context seem to be generally more complicated. 
To see this, let us come back to the theorem of Erd\H os and P\'osa.

Let us say that a class of graphs (or more generally objects) $\mathcal F$ has the 
\emph{edge-Erd\H os-P\'osa property} if for every integer $k$, there is a number $f(k)$ such that
every graph either contains $k$ \emph{edge-disjoint} subgraphs each isomorphic to some element of $\mathcal F$
or an \emph{edge set} $Z$ of size at most $f(k)$ that meets every subgraph contained in~$\mathcal F$.

Simonovits' proof of Theorem~\ref{epthm} can be modified so that it yields 
the edge-Erd\H os-P\'osa property for cycles, and Mader's theorem
shows that $A$-paths have it, too. Some more classes are known to have the edge-Erd\H os-P\'osa
property, but not as many as are known to have the ordinary, vertex-version, property. 
In some rare cases, the edge-property can be deduced from the vertex-property. This is, for instance,
the case for even cycles. That even cycles have the ordinary Erd\H os-P\'osa property seems to 
have been observed first by Neumann-Lara:

\begin{theorem}[Neumann-Lara, see~\cite{DNL87} or~\cite{Tho88}]\label{vxevencycthm}
Even cycles have the vertex-Erd\H os-P\'osa propery. 
That is, there is a function $f$ such that 
for every positive integer $k$ every graph $G$ either contains $k$ disjoint even cycles or there is a 
vertex set $X$ of size $|X|\leq f(k)$ such that $G-X$ does not contain any even cycle. 
\end{theorem}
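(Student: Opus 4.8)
The plan is to reduce even cycles to odd $A$-paths and then invoke that odd $A$-paths have the Erd\H os-P\'osa property (which follows from Theorem~\ref{wolthm} or from the result of Chudnovsky et al.\ mentioned in the introduction). First I would dispose of the trivial case: if $G$ contains $k$ disjoint triangles, or more generally $k$ disjoint short even cycles, we are done, so the real content is about long even cycles; in fact it suffices to handle graphs of bounded maximum degree after a standard reduction, but let me instead go the $A$-path route directly, which is cleaner. The key observation is the following. Suppose $G$ has no even cycle at all; then $G$ is very close to a forest --- more precisely, each block of $G$ is either an edge or an odd cycle, so $G$ is a so-called ``cactus'' with only odd cycles, and in particular $G$ has at most one cycle through any vertex. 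Iterating, if $G$ has fewer than $k$ disjoint even cycles, one expects $G$ to decompose into a bounded number of ``odd-cactus-like'' pieces plus a small hitting set; but pushing this directly is awkward, so I would instead use the $A$-path translation.

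The main step: pick a spanning forest, orient nothing, but rather for each edge $e=uv$ of $G$ consider the set $A$ to be $V(G)$ itself and observe that an even cycle through $e$ corresponds, after deleting $e$, to an odd $u$--$v$-path, i.e.\ an odd $A$-path with endvertices $u$ and $v$. More usefully, construct an auxiliary graph $G'$: for each vertex $v$ of $G$ of degree $d$, split $v$ into $d$ copies, one per incident edge, join all copies of $v$ by a ``cherry'' gadget, and let $A$ be a designated set of these copies; arrange the gadget so that even cycles in $G$ become odd $A$-paths in $G'$ and disjointness is preserved in both directions. Then $k$ disjoint odd $A$-paths in $G'$ yield $k$ disjoint even cycles in $G$, and a hitting set of size $g(k)$ for odd $A$-paths in $G'$ pulls back to a vertex set of size $O(g(k))$ meeting every even cycle in $G$, giving $f(k)=O(g(k))$. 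The parity bookkeeping in designing the gadget is the one place where care is needed: each internal vertex of the $A$-path runs through a gadget and must contribute an even number of extra edges (or a fixed parity that can be compensated globally), so that the length parity of the $A$-path equals that of the corresponding cycle in $G$.

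The hard part will be making the gadget reduction both parity-correct and disjointness-faithful simultaneously: a naive vertex-split can allow an ``$A$-path'' in $G'$ to use two copies of the same original vertex $v$ in a way that does not trace out a genuine cycle in $G$, or it can change the length parity depending on which pair of copies is entered and exited. I would resolve this by making the gadget at each vertex a path on the copies (so that entering at copy $i$ and leaving at copy $j$ adds $|i-j|$ edges, whose parity we control by inserting subdivision vertices), and by routing $A$ only at the two ``ends'' of each cycle so that an $A$-path visits each original vertex's gadget at most once. Once the translation is set up correctly, the theorem is immediate from the Erd\H os-P\'osa property of odd $A$-paths. (An alternative, and historically the original, route avoids $A$-paths entirely: reduce to graphs of maximum degree $3$, where every cycle is ``short or contains a branch vertex'', and apply a direct argument --- but the $A$-path reduction is shorter to write given Theorem~\ref{wolthm} is already available in the paper.)
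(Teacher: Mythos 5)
The paper does not prove this theorem at all: it is stated as a cited result (Neumann-Lara, see \cite{DNL87} or \cite{Tho88}) and is used as a black box in the proof of Theorem~\ref{thm:evenCyclesEdge}. So there is no ``paper's own proof'' to match against; what matters here is whether your reduction actually works, and I do not think it does as sketched.

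The central gap is in the translation from even cycles to odd $A$-paths. A cycle has no distinguished pair of endpoints, so to turn it into an $A$-path you must ``cut'' it somewhere, and the set $A$ must be a fixed subset of $V(G')$, the same for every cycle. Your phrase ``routing $A$ only at the two `ends' of each cycle'' does not resolve this, because a cycle has no ends and $A$ cannot be chosen per cycle. If you try to make the cut possible at every vertex (so every vertex contributes some copies to $A$), you immediately create spurious $A$-paths: an odd $A$-path whose endpoints are copies of two \emph{different} original vertices $u\neq v$ corresponds to an odd $u$--$v$-walk in $G$, not to an even cycle, and a graph with no even cycle at all (a cactus of odd cycles) can still contain arbitrarily many disjoint such walks. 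If instead you restrict $A$ to copies of a single vertex, you only capture cycles through that vertex. The second gap is the one you flag yourself but do not close: the passage gadget must forbid an $A$-path from re-entering the same original vertex's gadget. A path of copies lets a simple $A$-path make two vertex-disjoint passes (say $v_1\to v_2$ and $v_4\to v_5$), which again does not trace a simple cycle in $G$. A single passage center $p_v$ blocks repeated passage, but then you cannot also hang two $A$-leaves on $p_v$ to serve as endpoints of a cut cycle without revisiting $p_v$. These two requirements (unique passage, available pair of $A$-endpoints at every vertex, parity preserved) pull against each other, and no concrete gadget satisfying all of them is exhibited.

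Note also that the direction of dependence in this paper is the opposite of what your plan needs: the proof of Theorem~\ref{thm:evenCyclesEdge} \emph{uses} the vertex-version for even cycles together with a spanning-tree/$A$-path argument to handle high-degree vertices, and that argument only produces many disjoint even cycles, never a hitting set. The hitting set in that proof is imported from Theorem~\ref{vxevencycthm}, so one cannot hope to obtain Theorem~\ref{vxevencycthm} by running the same $A$-path machinery backwards. The known proofs (Neumann-Lara's observation, and Thomassen's more general result for cycles of length $\equiv 0\pmod m$) go via structural arguments about graphs without even cycles (every block is an edge or an odd cycle) combined with a treewidth or bounded-degree reduction, which is closer to the alternative you mention in passing than to the $A$-path route you develop.
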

Chekuri and Chuzhoy~\cite{CC13} demonstrate that the size of the hitting set can be bounded 
by a function $f(k)= O(k\, \polylog k)$.

\begin{theorem}\label{thm:evenCyclesEdge}
Even cycles have the edge-Erd\H os-P\'osa property.
\end{theorem}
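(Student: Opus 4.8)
The plan is to deduce the edge-version from the vertex-version (Theorem~\ref{vxevencycthm}) by a standard line-graph-style reduction, but one must be careful because the line graph of $G$ does not straightforwardly turn even cycles of $G$ into even cycles of $L(G)$. First I would recall the general principle: if a class $\mathcal F$ of subgraphs has the vertex-Erd\H os-P\'osa property and, in addition, every member of $\mathcal F$ has bounded maximum degree — or more precisely, if the ``right'' auxiliary graph captures edge-disjointness as vertex-disjointness — then the edge-version follows. Concretely, I would work with a graph $H=H(G)$ whose vertices are the edges of $G$, where two edges are adjacent if they share an endvertex, together with the information of which vertex they share; a walk in $G$ through a vertex $v$ of degree $d$ corresponds to choosing a pair of the $d$ edges at $v$, and edge-disjoint subgraphs of $G$ become vertex-disjoint subgraphs of $H$. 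The issue is that a single vertex $v$ of $G$ of high degree becomes a clique in $H$, and an even cycle of $G$ may traverse $v$ once but the corresponding structure in $H$ uses only two of the clique vertices; conversely not every cycle in $H$ comes from a cycle in $G$.

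The cleaner route, which I would actually carry out, is to split vertices. Given $G$ and $k$, suppose $G$ has no $k$ edge-disjoint even cycles. Form $G'$ by replacing each vertex $v$ of degree $d_v$ by a small gadget: a set of $d_v$ ``port'' vertices, one per incident edge, joined together so that the gadget is traversable between any two ports but forces a controlled parity contribution. The simplest such gadget is a star with an extra central vertex, or — to preserve parities exactly — a subdivided star in which every port-to-port route through the gadget has even length, so that parity of cycles is preserved. In $G'$, edge-disjoint even cycles of $G$ correspond to vertex-disjoint even cycles of $G'$ (because distinct even cycles of $G$ sharing a vertex $v$ use distinct ports of the $v$-gadget), so $G'$ has no $k$ vertex-disjoint even cycles. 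By Theorem~\ref{vxevencycthm} there is a vertex set $X'$ in $G'$ with $|X'|\le f(k)$ meeting every even cycle of $G'$. Each vertex of $X'$ lies in some gadget; for a vertex in the gadget of $v$ that is a port corresponding to an edge $e$, put $e$ into $Z$; for an internal gadget vertex, one can re-route $X'$ (by a short exchange argument, replacing an internal hitting vertex by a port) to assume $X'$ consists only of port vertices, so $Z$ is well defined with $|Z|\le f(k)$. Then $Z$ is an edge set hitting every even cycle of $G$, since an even cycle of $G$ avoiding all edges of $Z$ lifts to an even cycle of $G'$ avoiding $X'$.

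The main obstacle, and the step I would spend the most care on, is designing the vertex gadget so that (i) parity of every cycle is \emph{exactly} preserved — this is why subdivision to make every port-to-port internal path even-length is needed, and one must check that closing up a cycle through several gadgets multiplies out correctly; (ii) vertex-disjointness in $G'$ genuinely corresponds to edge-disjointness in $G$, with no spurious short even cycles created \emph{inside} a single gadget (a subdivided star has no cycles at all, which is the point of choosing it over a clique); and (iii) the exchange argument that pushes $X'$ onto port vertices does not increase $|X'|$ and does not destroy the hitting property. Once the gadget is fixed, the rest is bookkeeping: the bound is $f(k)$, the same function (up to nothing) as in Theorem~\ref{vxevencycthm}, hence $O(k\,\polylog k)$ by Chekuri and Chuzhoy. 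I would also remark that this is precisely the kind of argument that fails for even \emph{$A$-paths} — which is the contrast the paper is building toward — because there the endpoints lie in the fixed set $A$ and the gadget/splitting trick interferes with membership in $A$, so the edge-version genuinely breaks down.
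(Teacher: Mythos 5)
Your reduction has a genuine gap at the re-routing step, and the gap is exactly where the difficulty of the problem lives, namely at high-degree vertices.

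First, a smaller issue with the stated correspondence. With a (subdivided) star gadget, two edge-disjoint even cycles of $G$ passing through the same vertex $v$ both lift through the centre $c_v$ of the $v$-gadget, so they are \emph{not} vertex-disjoint in $G'$; the sentence ``edge-disjoint even cycles of $G$ correspond to vertex-disjoint even cycles of $G'$\,'' is false in the direction you write it. It happens that the direction you actually need for Step~2 is the reverse one and it is true (a cycle in $G'$ uses at most two ports of each gadget, so vertex-disjoint cycles in $G'$ project to vertex-disjoint, hence edge-disjoint, cycles in $G$, with parity preserved since every port-to-port route has even length). So the conclusion of Step~2 is correct, but for a different reason than the one you give.

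The real problem is the ``short exchange argument'' in Step~4. If $c_v\in X'$, you cannot in general replace $c_v$ by a single port without destroying the hitting property: the even cycles that $X'\setminus\{c_v\}$ misses can pass through $c_v$ via many different port pairs, and any one port misses most of them. Replacing $c_v$ by \emph{all} ports of $v$ restores the hitting property, but that costs $\deg_G(v)$ edges, which is unbounded. So the argument only goes through when the maximum degree of $G$ is bounded in terms of $k$ --- and in that case you do not need the gadget at all, because you can simply take every edge incident with the vertex hitting set guaranteed by Theorem~\ref{vxevencycthm}. In other words, the hard case your proposal silently assumes away is exactly the high-degree case. Any fix of the gadget that removes the central bottleneck (so that edge-disjointness really does translate to vertex-disjointness in both directions) creates cycles \emph{inside} the gadget, some of them even, which is the other obstruction you yourself flagged; there is no ``neutral'' gadget here.

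The paper's proof is structured around precisely this obstacle and does not use a line-graph-style reduction at all. It first assumes there is a vertex $x$ of degree at least $6k$ and builds $k$ edge-disjoint even cycles through $x$ directly: it subdivides the edges at $x$, takes a spanning tree of each component of $G-x$, uses the tree's bipartition to select a large monochromatic subset of the subdivision vertices, and applies Lemma~\ref{treelem} (Thomassen) to find many edge-disjoint paths between them, which close up through $x$ to even cycles. Only when every vertex has degree less than $6k$ does it invoke Theorem~\ref{vxevencycthm} and convert the vertex hitting set to an edge set by taking all incident edges, which is then cheap because the degree is bounded. Your approach, once the exchange step is repaired, would be forced into the same case split, at which point it coincides in substance with the paper's argument.
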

We use 
here a technique that is similar to the one in the proof of Theorem~\ref{eventhm}.
\begin{proof}
Let $f$ be a function as in Theorem~\ref{vxevencycthm}.
We may assume that every vertex is contained in some even cycle --- otherwise we could delete 
the vertex without changing anything.

Assume first that $G$ contains a vertex $x$ of degree at least~$6k$.
Let $c$ be the number of components of $G-x$, and consider a component $K$ of $G-x$. 
Then there is an even cycle that meets $K$.
Since the vertex set of any such even cycle is contained in $V(K)\cup\{x\}$, 
we therefore find at least $c$ edge-disjoint even cycles, one for each component of $G-x$. 
Thus, we may assume that $c\leq k-1$. 

Subdivide every edge between $x$ and $K$ exactly once, and denote the set of \new{subdividing} vertices by $A$.
In particular, $|A|= |N(x)\cap V(K)|$.  
Pick a spanning tree $T$ of $K$ (in the subdivided graph). 
The bipartition of $T$ induces a bipartition of $A\cap V(K)$.
Let $A'$ be one of the two induced bipartition classes of $A$ such that $|A'|\geq \lceil\frac{1}{2}|A\cap V(K)|\rceil$.  
Applying Lemma~\ref{treelem}, we obtain $\lfloor \tfrac{1}{2}|A'|\rfloor$ many edge-disjoint  $A'$-paths 
contained in $K$, each of which is an even $A$-path. By replacing the first and last edge of such a path $P$ 
by the edge between the second vertex of the path and $x$, and by the edge between the penultimate vertex and $x$, 
we obtain an even cycle. In this way we obtain pairwise edge-disjoint even cycles contained in $G[K+x]$.
The number of these is at least 
\[
\left\lfloor \frac{ \lceil\frac{1}{2}|N(x)\cap V(K)|\rceil}{2}\right\rfloor\geq 
\frac{1}{4}|N(x)\cap V(K)|-\frac{1}{2}.
\]
Summing over all components $K$ of $G-x$ we obtain at least 
	\begin{align*}
		\sum_{K} \left(\tfrac{1}{4}|N(x)\cap V(K)|-\tfrac{1}{2}\right) = \tfrac{1}{4}|N(x)|- \tfrac{1}{2}c
\geq k
\end{align*}
edge-disjoint even cycles, where we have used that $|N(x)|\geq 6k$ and $c\leq k$. 
	
	It remains to consider the case when no vertex in $G$ has degree at least $6k$. Since even cycles 
	have the vertex-Erd\H os-P\'osa property (Theorem~\ref{vxevencycthm}), there is a vertex set $X$ of 
	size at most $f(k)$ such that $G-X$ \new{does not contain any even cycle}. Let $F$ be the set of all edges incident 
	with any vertex in $X$. Then $F$ is an edge hitting set for even cycles of size $|F|\leq 6kf(k)$. 
\end{proof}

Normally, the edge-property does not follow as easily. 
Long cycles, for instance, do have the edge-property but in contrast to the vertex-version, 
the proof is much longer and quite a bit more complicated~\cite{BHJ17}.
While a frame argument is used,  as in our proof of Mader's theorem, the frame 
is much weaker. 
If the frame is large, then $k$ edge-disjoint long cycles are found, but if it is small, 
then more work is necessary and both outcomes \new{are  possible.} 

\comment{
Another reason why there might be no `right' frame is as follows:
let $G$ be a graph that is one of the target objects (and suppose a subdivision is also a target object) 
and let $G'$ arise from $G$ by adding a large number of parallel edges for every edge of $G$ and afterwards subdividing each edge (once).
Clearly, $G'$ contains a large number of edge-disjoint target objects, 
but for us it is unclear how a suitable frame in $G'$ should look like.
}

That we know less about the edge-Erd\H os-P\'osa property becomes immediately apparent when 
we consider $A$-paths. It is an open problem, whether long $A$-paths have it.

\begin{problem}\label{longApathsEdge}
Do long $A$-paths have the edge-Erd\H os-P\'osa property?
\end{problem}

We point out that another open problem would give an affirmative answer for long $A$-paths.
\begin{problem}
Do long $A$-cycles have the edge-Erd\H os-P\'osa property?
\end{problem}

Indeed, consider a graph $G$ with a vertex set $A$, and a fixed length~$\ell$. 
Now, add a vertex $s$ and link $s$ 
to each $a\in A$ by $d(a)$ parallel paths each of length~$2$. In the resulting graph $G'$
apply the \new{edge-Erd\H os-P\'osa} property for long $\{s\}$-cycles, where we use a minimal length of $\ell+4$.
Then every long $\{s\}$-cycle contains a long $A$-path, and vice versa, every long $A$-path can be 
extended to a long $\{s\}$-cycle.
\new{
Hitting} sets may be translated in a similar fashion.

\begin{figure}[htb]
\centering
\begin{tikzpicture}

\def\wallheight{6}
\def\brickheight{0.4}

\pgfmathtruncatemacro{\lastrow}{\wallheight}
\pgfmathtruncatemacro{\penultimaterow}{\wallheight-1}
\pgfmathtruncatemacro{\lastrowshift}{mod(\wallheight,2)}
\pgfmathtruncatemacro{\lastx}{2*\wallheight+1}
\pgfmathtruncatemacro{\double}{2*\wallheight}

\tikzstyle{parityedge}=[line width=2pt,bend left=60,grau]

\node at (\wallheight*\brickheight,4+\wallheight*\brickheight+0.3){$B$};

\begin{scope}
\draw[hedge] (\brickheight,0) -- (2*\wallheight*\brickheight+\brickheight,0);
\foreach \i in {1,...,\penultimaterow}{
  \draw[hedge] (0,\i*\brickheight) -- (2*\wallheight*\brickheight+\brickheight,\i*\brickheight);
}
\draw[hedge] (\lastrowshift*\brickheight,\lastrow*\brickheight) to ++(2*\wallheight*\brickheight,0);

\foreach \j in {0,2,...,\penultimaterow}{
  \foreach \i in {0,...,\wallheight}{
    \draw[hedge] (2*\i*\brickheight+\brickheight,\j*\brickheight) to ++(0,\brickheight);
  }
}
\foreach \j in {1,3,...,\penultimaterow}{
  \foreach \i in {0,...,\wallheight}{
    \draw[hedge] (2*\i*\brickheight,\j*\brickheight) to ++(0,\brickheight);
  }
}

\def\first{0}

\foreach \i in {1,...,\lastx}{
  \node[tinyvx] (w\i\first) at (\i*\brickheight,0){};
}
\foreach \j in {1,...,\penultimaterow}{
  \foreach \i in {0,...,\lastx}{
    \node[tinyvx] (w\i\j) at (\i*\brickheight,\j*\brickheight){};
  }
}
\foreach \i in {1,...,\lastx}{
  \node[tinyvx] (w\i\lastrow) at (\i*\brickheight+\lastrowshift*\brickheight-\brickheight,\lastrow*\brickheight){};
}

\draw[parityedge] (w2\lastrow) to (w4\lastrow);
\draw[parityedge] (w6\lastrow) to (w8\lastrow);
\draw[parityedge] (w10\lastrow) to (w12\lastrow);

\node[hvertex,fill=grau,label=below:$A$] (lA) at (-2,3*\brickheight) {};
\node[hvertex,fill=grau,label=below:$A$] (rA) at (2*\wallheight*\brickheight+\brickheight+2,3*\brickheight) {};

\foreach \j in {2,4,...,\penultimaterow}{
  \draw[hedge] (lA) -- (w0\j);
}
\draw[hedge] (lA) -- (w1\wallheight);
\foreach \j in {1,3,...,\penultimaterow}{
  \draw[hedge] (lA) -- (w0\j) node[midway, tinyvx]{};
}

\foreach \j in {1,3,...,\penultimaterow}{
  \draw[hedge] (rA) -- (w\lastx\j);
}

\foreach \j in {0,2,...,\penultimaterow}{
  \draw[hedge] (rA) -- (w\lastx\j) node[midway,tinyvx]{};
}

\end{scope}

\begin{scope}[shift={(0,4)}]

\draw[hedge] (\brickheight,0) -- (2*\wallheight*\brickheight+\brickheight,0);
\foreach \i in {1,...,\penultimaterow}{
	\draw[hedge] (0,\i*\brickheight) -- (2*\wallheight*\brickheight+\brickheight,\i*\brickheight);
}
\draw[hedge] (\lastrowshift*\brickheight,\lastrow*\brickheight) to ++(2*\wallheight*\brickheight,0);

\foreach \j in {0,2,...,\penultimaterow}{
	\foreach \i in {0,...,\wallheight}{
		\draw[hedge] (2*\i*\brickheight+\brickheight,\j*\brickheight) to ++(0,\brickheight);
	}
}
\foreach \j in {1,3,...,\penultimaterow}{
	\foreach \i in {0,...,\wallheight}{
		\draw[hedge] (2*\i*\brickheight,\j*\brickheight) to ++(0,\brickheight);
	}
}

\def\first{0}

\foreach \i in {1,...,\lastx}{
	\node[tinyvx] (w\i\first) at (\i*\brickheight,0){};
}
\foreach \j in {1,...,\penultimaterow}{
	\foreach \i in {0,...,\lastx}{
		\node[tinyvx] (w\i\j) at (\i*\brickheight,\j*\brickheight){};
	}
}
\foreach \i in {1,...,\lastx}{
	\node[tinyvx, fill=black] (w\i\lastrow) at (\i*\brickheight+\lastrowshift*\brickheight-\brickheight,\lastrow*\brickheight){};
}

\node[hvertex,fill=grau,label=below:$A$] (lA) at (-2,3*\brickheight) {};
\node[hvertex,fill=grau,label=below:$A$] (rA) at (2*\wallheight*\brickheight+\brickheight+2,3*\brickheight) {};

\foreach \j in {1,...,\penultimaterow}{
	\draw[hedge] (lA) -- (w0\j);
}
\draw[hedge] (lA) -- (w1\wallheight);

\foreach \j in {0,...,\penultimaterow}{
	\draw[hedge] (rA) -- (w\lastx\j);
}
\end{scope}
\end{tikzpicture}
\caption{Counterexamples: neither $A$--$B$--$A$-paths nor even/odd $A$-paths have the edge-Erd\H os-P\'osa property}
\label{counterfig}
\end{figure}
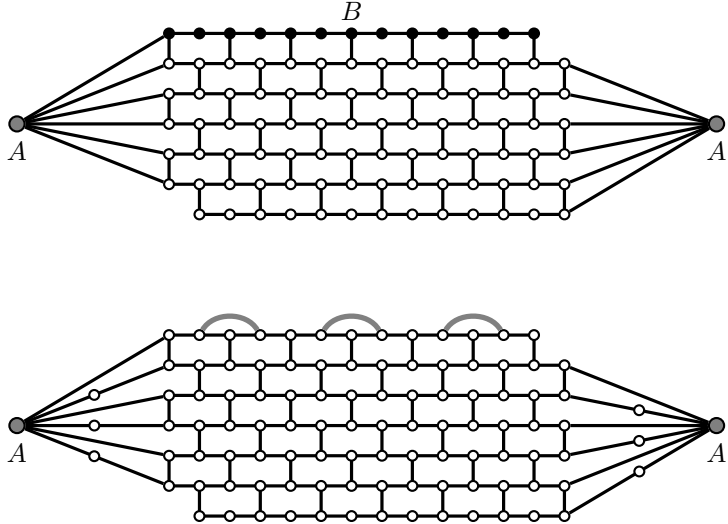

\medskip
If some class $\mathcal F$ does not have the vertex-Erd\H os-P\'osa property, such as odd cycles, then it
usually does not have the edge-Erd\H os-P\'osa property either. This is because the counterexamples for the vertex-property
are normally based on a grid plus \new{some} extra structure; these then often (always?) turn into counterexamples
for the edge-Erd\H os-P\'osa property if a wall is used instead. (A wall is the subcubic analogue of a grid; 
for a formal definition see for instance Robertson and Seymour~\cite{RS95}).

In contrast, classes $\mathcal F$ may have the vertex-Erd\H os-P\'osa property but not the edge-Erd\H os-P\'osa property. 
We show that here  for $A$--$B$--$A$-paths,
 for even $A$-paths and for odd ones.
 To the best of our knowledge, such graph classes were not known before.

Consider the top graph in Figure~\ref{counterfig}: it consists of a wall with $10r\times 10r$ bricks together
with two vertices in $A$, one of which is linked to the left side, while the other is linked to the right side of the wall.
The top row of the wall makes up the vertex set~$B$. Clearly, no two $A$--$B$--$A$-paths can be edge-disjoint. 

At the same time, no hitting set of edges contains fewer than $r$ edges. Indeed, let $X$ be some edge set of fewer 
than $r$ edges, let $a_1$ denote the left-hand side vertex in $A$ and $a_2$ the right-hand side one.
A wall of size $10r\times 10r$ contains $10r+1$ disjoint vertical paths $P_0,\ldots, P_{10r}$,
ordered according to their starting vertex in the top row.
As $|X|<r$, there are at least two consecutive vertical paths $P_i,P_{i+1}$
such that $X$ is disjoint from $P_i\cup P_{i+1}$ and does not contain any of the two edges $e_i,e'_i$ 
with endvertices in $B$ between
the two starting vertices of $P_i$ and $P_{i+1}$ either.
Since there are, moreover, $10r$  edge-disjoint $a_1$--$P_i$-paths,
and also that many edge-disjoint $P_{i+1}$--$a_2$-paths, the set $X$ must miss 
at least one $a_1$--$P_i$-path, $Q_1$ say, and at least one $P_{i+1}$--$a_2$-path, $Q_2$ say. 
Then $Q_1\cup P_i\cup P_{i+1}\cup Q_2$ together with $e_i,e'_i$ contains an $A$--$B$--$A$-path that
avoids $X$. Thus, $X$ cannot be a hitting set of edges. This shows that $A$--$B$--$A$-paths do not 
have the edge-Erd\H os-P\'osa property.

The construction for odd (or even) $A$-paths is very similar, and shown in the bottom part of Figure~\ref{counterfig}.
Here, by subdividing certain edges incident with the left or the right $A$-vertex, 
we make sure  that every $A$-path that avoids the \new{grey} edges 
has even (resp.\ odd) length. If we define $B$ as the set of endvertices of the \new{grey} edges, then 
every odd (resp.\ even) $A$-path is an $A$--$B$--$A$-path and we may argue as above. 

\new{$A$-combs, in the sense of Section~\ref{sec:Atrees}, also fail to have the edge-Erd\H os-P\'osa property. 
The counterexamples are very similar to the ones discussed in this section.}
\comment{
\new{In a similar way we can also construct a counterexample for the edge-version of combs (the vertex version is shown in Section~\ref{sec:Atrees}): Add a new vertex to the upper graph in Figure~\ref{counterfig} and connect it with all vertices in $B$.
	Label the new vertex with $A$. 
	Then, the graph does not contain two disjoint $A$-$2$-combs
	but every edge set of small size misses at least one $A$-$2$-comb.}
}

\medskip
As in Section~\ref{evensec} we can generalise even or odd $A$-paths to zero $A$-paths with 
respect to some (directed or undirected) labeling of the edges with an abelian group~$\Gamma$. 
While in the vertex-version the group might make a difference, this is not the case
for the edge-Erd\H os-P\'osa property. Indeed, the construction shown in the bottom part
of Figure~\ref{counterfig} turns into one for zero $A$-paths: we just 
label all edges incident 
with the left vertex in $A$ with a non-zero group element $\mu\in\Gamma$, we label
all \new{grey} edges with $-\mu$ and all other edges with~$0$ (in addition, if there is a reference orientation, then orient all edges from left to right and top to bottom).

\section{Directed versions}
\label{secdirect}

Why do $A$--$B$--$A$-paths have the vertex-Erd\H os-P\'osa property but not the 
edge-version? Because in the edge-version we can force the $A$--$B$--$A$-paths 
in examples such as in Figure~\ref{counterfig} to cross the wall from left to right:
as no path can return to its starting vertex, we can fix start and endvertex in the 
edge-version by only assigning two vertices to $A$. In the vertex-version this fails,
as we could always put the two vertices in the hitting set. \new{But} if, instead, we replace the left
and the right vertex in $A$ by many vertices, then the $A$-paths can return to their starting 
side. 

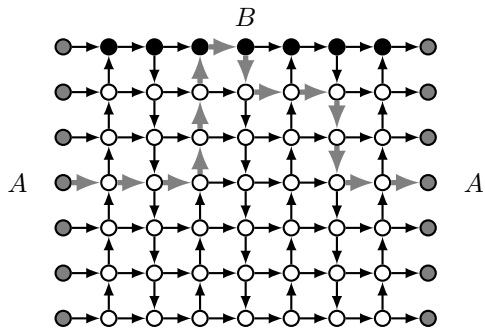
\begin{figure}[htb]
\centering
\begin{tikzpicture}
\tikzstyle{Apath}=[line width=2pt,grau,->];
	
\def\hstep{0.6}
\def\height{7}
\def\hminusone{6}
	
\foreach \i in {1,...,\height}{
  \foreach \j in {1,...,\hminusone}{
    \node[hvertex] (t\i\j) at (\i*\hstep,\j*\hstep){};
  }
}
	
\foreach \i in {1,...,\height}{
  \node[hvertex,fill=black] (t\i\height) at (\i*\hstep,\height*\hstep){};
}
	
\node at (0.5*\height*\hstep+0.5*\hstep,\height*\hstep+0.4){$B$};
	
\foreach \i in {2,...,\height}{
  \foreach \j in {1,...,\height}{
    \pgfmathtruncatemacro{\previous}{\i-1}
    \draw[harrow] (t\previous\j) to (t\i\j);
  }
}
	
\foreach \j in {2,...,\height}{
  \foreach \i in {1,3,...,\height}{
    \pgfmathtruncatemacro{\previous}{\j-1}
    \draw[harrow] (t\i\previous) to (t\i\j);
  }
  \foreach \i in {2,4,...,\height}{
    \pgfmathtruncatemacro{\previous}{\j-1}
    \draw[harrow] (t\i\j) to (t\i\previous);
  }
}

\foreach \j in {1,...,\height}{
  \node[hvertex, fill=grau] (la\j) at (0,\j*\hstep){};
  \node[hvertex, fill=grau] (ra\j) at (\height*\hstep+\hstep,\j*\hstep){};
  \draw[harrow] (la\j) to (t1\j);
  \draw[harrow] (t\height\j) to (ra\j);
}
	
\draw[color=white, line width=4pt] (la4) -- (t14) -- (t24) -- (t34) -- (t35) -- (t36) -- (t37) -- (t47) -- (t46) -- (t56) -- (t66) -- (t65) -- (t64) -- (t74) -- (ra4);

\draw[Apath] (la4) -- (t14);
\draw[Apath] (t14) -- (t24);
\draw[Apath] (t24) -- (t34);
\draw[Apath] (t34) -- (t35);
\draw[Apath] (t35) -- (t36);
\draw[Apath] (t36) -- (t37); 
\draw[Apath] (t37) -- (t47);
\draw[Apath] (t47) -- (t46);
\draw[Apath] (t46) -- (t56);
\draw[Apath] (t56) -- (t66);
\draw[Apath] (t66) -- (t65);
\draw[Apath] (t65) -- (t64);
\draw[Apath] (t64) -- (t74);
\draw[Apath] (t74) -- (ra4);

\node at (-\hstep,0.5*\height*\hstep+0.5*\hstep) {$A$};
\node at (\height*\hstep+2*\hstep,0.5*\height*\hstep+0.5*\hstep) {$A$};
	
\end{tikzpicture}
\caption{A directed $A$--$B$--$A$-path}\label{directedfig}
\end{figure}

It is intuitively clear that we can also enforce a direction of the $A$-paths in a digraph, 
and indeed, directed $A$--$B$--$A$-paths do not have the Erd\H os-P\'osa property. 
To see this, consider the construction shown in Figure~\ref{directedfig}, where we
again scale the size of the grid to be (much) larger than the size of any purported hitting set.
Since every $A$--$B$--$A$-path needs to cross the grid from left to right, and needs to meet the top
row as well, no two disjoint such paths are possible. Any (vertex) hitting set, however, will need to 
contain a number of vertices that grows with the size of the grid. 

The construction easily transfers to the edge-Erd\H os-P\'osa property if 
the grid is replaced by a wall as in Figure~\ref{counterfig}, and, in a similar way, 
also extends to even or odd directed $A$-paths.

\bibliographystyle{amsplain}
\bibliography{../erdosposa}

\providecommand{\bysame}{\leavevmode\hbox to3em{\hrulefill}\thinspace}
\providecommand{\MR}{\relax\ifhmode\unskip\space\fi MR }
\providecommand{\MRhref}[2]{%
  \href{http://www.ams.org/mathscinet-getitem?mr=#1}{#2}
}
\providecommand{\href}[2]{#2}
\begin{thebibliography}{10}

\bibitem{BBR07}
E.~Birmel{\'e}, J.A. Bondy, and B.~Reed, \emph{The {E}rd{\H o}s-{P}\'osa
  property for long circuits}, Combinatorica \textbf{27} (2007), 135--145.

\bibitem{BHJ17}
H.~Bruhn, M.~Heinlein, and F.~Joos, \emph{Long cycles have the edge-{E}rd{\H
  o}s-{P\'o}sa property}, to appear in Combinatorica.

\bibitem{BJS14}
H.~Bruhn, F.~Joos, and O.~Schaudt, \emph{Long cycles through prescribed
  vertices have the {E}rd{\H o}s-{P\'o}sa property}, J.~Graph Theory
  \textbf{87} (2018), 275--284.

\bibitem{CC13}
C.~Chekuri and J.~Chuzhoy, \emph{Large-treewidth graph decompositions and
  applications}, arXiv:1304.1577 (2013).

\bibitem{CGGGLS06}
M.~Chudnovsky, J.~Geelen, B.~Gerards, L.~Goddyn, M.~Lohman, and P.~Seymour,
  \emph{Packing non-zero ${A}$-paths in group-labelled graphs}, Combinatorica
  \textbf{26} (2006), 521--532.

\bibitem{DNL87}
I.J. Dejter and V.~Neumann-Lara, \emph{Unboundedness for generalized odd cyclic
  transversality}, Colloq.\ Math.\ Soc.\ J\'anos Bolyai \textbf{52} (1987),
  195--203.

\bibitem{Die10}
R.~Diestel, \emph{Graph theory}, fourth ed., Springer, Heidelberg, 2010.

\bibitem{EP65}
P.~Erd{\H{o}}s and L.~P{\'o}sa, \emph{On independent circuits contained in a
  graph}, Can.\ J.\ Math. \textbf{7} (1965), 347--352.

\bibitem{FH14}
S.~Fiorini and A.~Herinckx, \emph{A tighter {E}rd{\H o}s-{P\'o}sa function for
  long cycles}, J.~Graph Theory \textbf{77} (2014), 111--116.

\bibitem{Gal61}
T.~Gallai, \emph{Maximum-minimum {S}\"atze und verallgemeinerte {F}aktoren von
  {G}raphen}, Acta Math.\ Hungar.\ Acad.\ Sci. \textbf{12} (1961), 131--173.

\bibitem{GGRSV09}
J.~Geelen, B.~Gerards, B.~Reed, P.~Seymour, and A.~Vetta, \emph{On the
  odd-minor variant of {H}adwiger's conjecture}, J.~Combin.\ Theory (Series B)
  \textbf{99} (2009), 20--29.

\bibitem{KK12}
N.~Kakimura and K.~Kawarabayashi, \emph{Packing directed circuits through
  prescribed vertices bounded fractionally}, SIAM J. Discrete Math. \textbf{26}
  (2012), 1121--1133.

\bibitem{KKM11}
N.~Kakimura, K.~Kawarabayashi, and D.~Marx, \emph{Packing cycles through
  prescribed vertices}, J.~Combin.\ Theory (Series B) \textbf{101} (2011),
  378--381.

\bibitem{KK14}
K.~Kawarabayashi and S.~Kreutzer, \emph{The directed grid theorem},
  arXiv:1411.5681 (2014).

\bibitem{Kri05}
M.~Kriesell, \emph{Disjoint {$A$}-paths in digraphs}, J.~Combin.\ Theory
  (Series B) \textbf{95} (2005), 168--172.

\bibitem{Mad78}
W.~Mader, \emph{{\"U}ber die {M}aximalzahl kantendisjunkter {A}-{W}ege}, Arch.
  Math. (Basel) \textbf{30} (1978), 325--336.

\bibitem{MNSW16}
F.~Mousset, A.~Noever, N.~\v{S}kori\'c, and F.~Weissenberger, \emph{A tight
  {E}rd{\H o}s-{P\'o}sa function for long cycles}, to appear in J.~Combin.\
  Theory (Series B).

\bibitem{PW12}
M.~Pontecorvi and P.~Wollan, \emph{Disjoint cycles intersecting a set of
  vertices}, J.~Combin.\ Theory (Series B) \textbf{102} (2012), 1134--1141.

\bibitem{RS86}
N.~Robertson and P.~Seymour, \emph{Graph minors. {V}. {E}xcluding a planar
  graph}, J.~Combin.\ Theory (Series B) \textbf{41} (1986), 92--114.

\bibitem{RS95}
\bysame, \emph{Graph minors. {XIII}. {T}he disjoint paths problem}, J.~Combin.\
  Theory (Series B) \textbf{63} (1995), 65--110.

\bibitem{Sch01}
A.~Schrijver, \emph{A short proof of {M}ader's {$\mathcal S$}-paths theorem},
  J.~Combin.\ Theory (Series B) \textbf{82} (2001), 319--321.

\bibitem{Sim67}
M.~Simonovits, \emph{A new proof and generalizations of a theorem of {E}rd\"os
  and {P}\'osa on graphs without $k+1$ independent circuits}, Acta
  Math.~Acad.~Sci.~Hungar. \textbf{18} (1967), 191--206.

\bibitem{Tho88}
C.~Thomassen, \emph{On the presence of disjoint subgraphs of a specified type},
  J. Graph Theory \textbf{12} (1988), 101--111.

\bibitem{Tho16}
\bysame, \emph{Orientations of infinite graphs with prescribed
  edge-connectivity}, Combinatorica \textbf{36} (2016), 601--621.

\bibitem{Wol10}
P.~Wollan, \emph{Packing non-zero {$A$}-paths in an undirected model of group
  labeled graphs}, J.~Combin.\ Theory (Series B) \textbf{100} (2010), 141--150.

\bibitem{Wol11}
\bysame, \emph{Packing cycles with modularity constraints}, Combinatorica
  \textbf{31} (2011), 95--126.

\end{thebibliography}

\bn

\appendix
\pagebreak
\section{Overview of Erd\H os-P\'osa results}

We summarise here what is known and what not about the Erd\H os-P\'osa property
for different types of cycles and paths. 
\bigskip

\noindent
\renewcommand{\arraystretch}{1.2}
\begin{tabular}{lll}\toprule
	\bf Class & \bf Vertex property & \bf Edge property\\
\midrule
	cycles & yes \cite{EP65} & 
yes, e.g.\ \cite[Ex.~9.6]{Die10}\\
	even cycles & yes \cite{Tho88} & yes,  Theorem~\ref{thm:evenCyclesEdge}\\
	odd cycles & no \cite{DNL87} & no$^*$ \\ 
	\parbox{5cm}{cycles of length $\equiv 0 \pmod{m}$} & yes \cite{Tho88} & open\\[0.1cm]
	\parbox{5cm}{cycles of length $\not\equiv 0 \pmod{m}$\\with odd $m$} & yes \cite{Wol11} & open\\[0.2cm]
	$A$-cycles & yes \cite{PW12} & yes \cite{PW12}\\
	long cycles & yes \cite{MNSW16} & yes \cite{BHJ17}\\
	long $A$-cycles & yes \cite{BJS14} & open\\
	directed long cycles & yes \cite{KK14} & open\\
	directed $A$-cycles & no \cite{KK12} & no$^*$\\
	$A$-paths & yes \cite{Gal61} & yes \cite{Mad78}\\
	non-zero $A$-paths & yes \cite{CGGGLS06,Wol10} & no, Figure~\ref{counterfig}\\
	$A$--$B$--$A$-paths & yes \cite{KKM11} & no,   Figure~\ref{counterfig}\\
	even $A$-paths & yes, Theorem \ref{eventhm} & no, Figure~\ref{counterfig}\\[0.2cm]
	\parbox{5cm}{$A$-paths of length $\equiv d \pmod{m}$ with composite $m>4$ }& no, 
Proposition \ref{dmodm} & no, Proposition~\ref{dmodm}$^*$\\[0.4cm]
	\parbox{5cm}{$A$-paths of length $\equiv d \pmod{m}$ for $m=4$ or $m$ prime }
	& open 
	& no, Figure~\ref{counterfig} \\[0.2cm]
	long $A$-paths & yes,  Theorem \ref{longthm} & open, Problem \ref{longApathsEdge}\\
even/odd $A$--$B$-paths & no, see Section~\ref{evensec} & no, see Section~\ref{evensec}$^*$\\
	directed $A$-paths & yes \cite{Kri05} & open\\
	directed $A$--$B$--$A$-paths & no, see Section \ref{secdirect}  & no$^*$\\
\bottomrule
\end{tabular}

\smallskip
{\small $^*$modify the counterexample by replacing the grid by a wall} 
\en

\vfill

\small
\vskip2mm plus 1fill
\noindent
Version \today{}
\bigbreak

\noindent
Henning Bruhn
{\tt <henning.bruhn@uni-ulm.de>}\\
Matthias Heinlein
{\tt <matthias.heinlein@uni-ulm.de>}\\
Institut f\"ur Optimierung und Operations Research\\
Universit\"at Ulm\\
Germany\\

\noindent
Felix Joos
{\tt <f.joos@bham.ac.uk>}\\
School of Mathematics\\ 
University of Birmingham\\
United Kingdom

\end{document}